\documentclass[11pt]{article}

\usepackage{amsmath, amsfonts, amssymb, amsthm, microtype, enumerate, enumitem, color, soul, nicefrac}

\usepackage[colorlinks=true, pdfstartview=FitV, linkcolor=black, citecolor=black, urlcolor=blue]{hyperref}
\usepackage{mathtools} 
\usepackage{bbm}

\newcommand{\rev}[1]{{#1}} 

\usepackage[T1]{fontenc}
\usepackage{lmodern, microtype}
\linespread{1.14}
\usepackage[small]{titlesec}
\usepackage[left=1.3in, right=1.3in, top=1.3in, bottom=1.3in]{geometry}


\hypersetup{
  colorlinks=true,
  linkcolor=blue,
  citecolor=blue 
}


\def\pmb#1{\noindent
    \setbox0=\hbox{$#1$}%
    \kern-.025em\copy0\kern-\wd0
    \kern .05em\copy0\kern-\wd0
    \kern-.025em\raise .0433em\box0}

\def\mathtester#1{\relax\ifmmode #1 \else $#1$\fi}

\def\Cal#1{\mathtester{\mathcal #1}}
\def\Bol#1{\mathtester{\mathbb #1}}

\def\M{{\Cal M}}        
        \def\P{{\Cal P}}


\def\EE{{\Bol E}}

        \def\NN{{\Bol N}}
        \def\PP{{\Bol P}}   
        \def\RR{{\Bol R}}

\def\dd{{\bf d}}		 
\def\ee{{\bf e}}		 



\def\argmax{\mathop{\rm argmax}}		
					\def\|{\,|\,}

\def\dd{\mathrm{d}}
\def\Var{\mathrm{Var}}


\def\ee{\mathrm{e}}


\newtheorem{theorem}{Theorem}
\newtheorem{lemma}{Lemma}

\newtheorem{corollary}{Corollary}
\newtheorem{proposition}{Proposition}

\newtheorem{remark}{Remark}

\newcommand{\ind}[1]{{\mathbbm{1}_{\left\{{#1}\right\}}}}

\newtheorem{definition}{Definition}
\let\olddefinition\definition
\renewcommand{\definition}{\olddefinition\normalfont}


\newcommand{\fosd}{{1}}
\newcommand{\sst}{{2}}


\usepackage{natbib}
\bibliographystyle{abbrvnat}
\setcitestyle{authoryear,open={(},close={)}}


\title{\bf \Large Stochastic Dominance Under Independent Noise\thanks{We are grateful to the editor and the referees for their comments and suggestions. We also thank Kim Border, Simone Cerreia-Vioglio, Jak\u{s}a Cvitanic, Ed Green, Elliot Lipnowski, Massimo Marinacci, Doron Ravid and Xiaosheng Mu, as well as seminar audiences at the Workshop on Information and Social Economics at Caltech, the University of Chile, Princeton University, and the Stony Brook International Conference on Game Theory. All errors and omissions are our own.}}

\author{ \large Luciano Pomatto\thanks{Caltech.} \ \ \ %
Philipp Strack\thanks{UC Berkeley.} \ \ \ %
Omer Tamuz\thanks{Caltech. Omer Tamuz was supported by a grant from the Simons Foundation (\#419427).}}

\begin{document}

\maketitle

\begin{abstract}
Stochastic dominance is a crucial tool for the analysis of choice under risk. It is typically analyzed as a property of two gambles that are taken in isolation. We study how additional independent sources of risk (e.g. uninsurable labor risk, house price risk, etc.) can affect the ordering of gambles. We show that, perhaps surprisingly, background risk can be strong enough to render lotteries that are ranked by their expectation ranked in terms of first-order stochastic dominance. We extend our results to second order stochastic dominance, and show how they lead to a novel, and elementary, axiomatization of mean-variance preferences.

\end{abstract}

\section{Introduction}
\label{sec:intro}


A choice between risky prospects is often difficult to make. It is perhaps even harder to predict, since decision makers vary in their preferences. One exception is when the  prospects are ordered in terms of stochastic dominance. In this case, a standard prediction is that the dominant option is chosen. For this reason, stochastic dominance has long been seen as a central concept in decision theory and the economics of information, and remains an area of active research
\citep[see, e.g.][among others]{muller2016between,cerreia2016stochastic}. 
More generally, stochastic dominance is an important tool for non-parametric comparison of distributions.

In the typical analysis of choices under risk, stochastic dominance is studied as a property of two prospects $X$ and $Y$ that are taken in isolation, without considering  other risks the decision maker might be facing. However, a choice between two gambles or assets often happens in the presence of unavoidable background risks, such as risks related to health, human capital, or exposure to the aggregate uncertainty in the economy.


In this paper we study how stochastic dominance is affected by background noise. Given two random variables $X$ and $Y$, we are interested in understanding if an independent risk $Z$ can make the resulting variables $X+Z$ and $Y+Z$ ordered in terms of stochastic dominance, and, if so, under what conditions. Our main result is that such a $Z$ exists whenever $X$ has higher mean than $Y$. We further show that if $X$ and $Y$ have equal mean, but the first has lower variance, then $Z$ can be chosen so that $X + Z$ dominates $Y + Z$ in terms of second-order stochastic dominance. The conditions of having higher mean or lower variance are necessary for the results to hold, and no other assumptions are imposed on $X$ and $Y$.

We show that the distribution of $Z$ can be obtained as a convex combination $(1 - \varepsilon)G + \varepsilon H$, where $G$ is the distribution of a Gaussian random variable with mean zero and (possibly large) variance, and $H$ is the distribution of an additional noise term with heavy tails. The weight $\varepsilon$ can be taken to be arbitrarily small. Hence, up to a vanishing degree of error, the background risk is indistinguishable from Gaussian/Normal noise. The role played by $H$ is to produce a distribution displaying sufficiently thick tails, a feature that, as we show, is necessary for the background noise to lead to stochastic dominance.


Interestingly, \cite{acemoglu2017microeconomic} provide evidence of key macroeconomic variables---presumably related to individual risk---that take a form similar to the one described above: their distributions are approximately Gaussian around their mean but display heavier tails. Our results, therefore, draw a connection between the analysis of choice under risk and the study of distributions with thick tail, a subject of long and renewed interest across different fields of economics, including macroeconomics \citep{gabaix2006institutional, morris2016crises}, finance \citep{gabaix2003theory, kelly2014tail}, as well as other disciplines \citep{nair2013fundamentals}.

As is well-known, ranking risky prospects solely in terms of their mean and variance is a crude approach for decision-making under risk, especially if compared to expected utility theory. Nevertheless, mean and variance remain key statistics driving the decisions of practitioners and investors. 
Our results show that such a seemingly ad-hoc approach can be justified  
in the presence of suitable background risk: while first and second order stochastic dominance are much stronger orders than the comparisons of means and variances, our results suggest that the latter are, under background risk, surprisingly good proxies for the former.

To the best of our knowledge, \rev{the only other paper to study the effect of independent noise on stochastic dominance is the contemporaneous, and independent, work of \cite{tarsneyexceeding}, who provides interesting examples of binary gambles that become ranked in terms of stochastic dominance after the addition of suitable background risk.}






The paper is organized as follows. Section \ref{sec:defs} presents the main results.  Section \ref{sec:mean-var} applies our results to provide a simple axiomatization of mean-variance preferences, distinct from the previous approaches \citep*[][]{maccheroni2006ambiguity}. In Section \ref{sec:auctions} we apply our results to the study of mechanism design with risk averse agents. \rev{In Section \ref{sec:uniform} in the appendix we restrict attention to bounded gambles and provide a more quantitative version of our main result.}

\section{Definitions and Main Results}
\label{sec:defs}

A random variable (or gamble) $X$ \emph{first-order stochastically dominates} a random variable $Y$, denoted $X \geq_\fosd Y$, if it satisfies
$
\EE[\phi(X)] \geq \EE[\phi(Y)]
$
for every increasing function $\phi$ for which the two expectations are well defined. We write $X >_\fosd Y$ if $X$ and $Y$ have distinct distributions (equivalently, if $X \geq_1 Y$ but $Y \not \geq_1 X$).\footnote{Yet again equivalently, $X >_\fosd Y$ if $\EE[\phi(X)] > \EE[\phi(Y)]$ for all strictly increasing functions $\phi$ for which the two expectations are well defined.} A random variable $X$ \emph{second-order} stochastically dominates $Y$, denoted $X \geq_\sst Y$, if $\EE[\phi(X)] \geq \EE[\phi(Y)]$ for every {\em concave} increasing function $\phi$ for which the two expectations are well defined.  As before, we write $X >_\sst Y$ if $X$ and $Y$ have distinct distributions.

\rev{We make a few observations which will be useful in what follows. Given two random variables $X$ and $Y$, if $X \geq_\fosd Y$ then $X + Z \geq_\fosd Y + Z$ for any $Z$ that is independent from the two.\footnote{If $X \geq_\fosd Y$ then for every increasing  $\phi$ and $z \in \RR$, $\EE[\phi(X + z)] \geq \EE[\phi(Y + z)]$ by applying the definition of first-order stochastic dominance to the function $x \mapsto \phi(x+z)$. Hence, $\EE[\phi(X+Z)] = \int \EE[\phi(X + z)] \dd F_Z(z) \geq \int \EE[\phi(Y + z)] \dd F_Z(z) = \EE[\phi(Y+Z)]$ and thus $X + Z \geq_\fosd Y + Z$.} Hence, two variables that are ranked in terms of stochastic dominance remain so after the addition of independent noise. Given a random variable $Z$, we say that $Z'$ is \textit{noisier than} $Z$ if $Z' = Z + W$ for some random variable $W$ that is independent of $Z$. Hence, if $X + Z >_{\fosd} Y+Z$ for some $Z$ that is independent of $X$ and $Y$, then $X + Z' >_{\fosd} Y+Z'$ for any independent $Z'$ that is noisier than $Z$. The analogous conclusions hold for second-order stochastic dominance. }

We denote by $\P_\infty$ the set of random variables that have all finite moments. That is, a gamble $X$ belongs to $\P_\infty$ if $\EE[|X|^n]$ is finite for all $n \in \NN$. 
We can now state our first main result:

\begin{theorem}\label{thm1}
Let $X$ and $Y$ be random variables with finite expectation. If $\EE[X] > \EE[Y]$ then there exists a random variable $Z$ that is independent from $X$ and $Y$ and such that
\begin{equation}\label{eq.thm.1}
    X + Z >_{\fosd} Y + Z.
\end{equation}
\rev{In particular, $X + Z' >_{\fosd} Y + Z'$ for every $Z'$ that is independent of $X$ and $Y$ and noisier than $Z$. Additionally, if $X,Y \in \P_\infty$ then $Z$ can be taken to belong to $\P_\infty$.}
\end{theorem}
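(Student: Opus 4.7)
The plan is to reduce the FOSD goal to a pointwise convolution inequality and then construct the noise $Z$ explicitly, as a mixture of a Gaussian and a heavy-tailed component. The ``noisier than'' clause at the end is automatic from the observation made immediately before the theorem: adding further independent noise preserves FOSD, so once $X+Z >_{\fosd} Y+Z$ is established and $Z' = Z+W$ is a noisier version independent of $(X,Y)$, one gets $X+Z' >_{\fosd} Y+Z'$ for free. So the real task is to produce a single working $Z$.

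First I would translate FOSD into a convolution statement. Writing $g(u)=F_Y(u)-F_X(u)$, the inequality $F_{X+Z}(t)\le F_{Y+Z}(t)$ for all $t$ becomes, after integration by parts (using that $Z$ has a density $f_Z$ and that $F_X-F_Y\to 0$ at $\pm\infty$), the pointwise condition $(f_Z \ast g)(t)\ge 0$ for every $t$. The function $g$ is bounded by $1$, vanishes at $\pm\infty$, and satisfies $\int g(u)\,\dd u = \EE[X]-\EE[Y] =: \Delta > 0$.

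Next I would take $Z$ to have density $f_Z = (1-\varepsilon)\,f_G + \varepsilon\,f_H$, with $f_G$ a centered Gaussian density of very large variance $\sigma^2$ and $f_H$ a symmetric heavy-tailed density on $\RR$. The Gaussian piece handles the ``bulk'' of $t$: a large-$\sigma$ Taylor expansion of $f_G(t-u)$ in $u$ gives $(f_G \ast g)(t)\approx \Delta\cdot f_G(t)>0$ for $|t|$ moderate, with a controlled error. The heavy-tailed piece handles large $|t|$: if $f_H$ decays slower than $g$, then $f_H(t-u)/f_H(t)\to 1$ uniformly for $u$ in any region carrying most of the $g$-mass, so $(f_H \ast g)(t)\approx\Delta\cdot f_H(t)>0$ for $|t|$ large. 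A joint tuning of $\sigma$, $\varepsilon$, and the tail parameter of $f_H$ then makes $(f_Z \ast g)(t)>0$ for every $t$. Strict FOSD follows because the characteristic functions $\phi_X$ and $\phi_Y$ differ near $0$ (since $\phi_X'(0) = i\EE[X] \neq i\EE[Y] = \phi_Y'(0)$) and $\phi_Z(0)=1\neq 0$, forcing $X+Z$ and $Y+Z$ to have distinct distributions. For the $\P_\infty$ strengthening, I would pick the random variable underlying $f_H$ to lie in $\P_\infty$ — for example with density $f_H(t)\propto \exp(-|t|^\alpha)$ for a small enough $\alpha\in(0,1)$ — which has all moments while still decaying slower than the super-polynomially decaying $g$ that arises when $X,Y\in\P_\infty$.

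The main obstacle is the quantitative tail calibration in the construction: one must verify that for large $|t|$ the positive contribution $\varepsilon(f_H \ast g)(t)$ dominates any negative Gaussian error $(1-\varepsilon)(f_G \ast g)(t)$, uniformly in $t$, with a choice of $f_H$ adapted to the (arbitrary) tails of $g$, and in the $\P_\infty$ case adapted also so that $f_H$ itself has all moments. This matching between $f_G$, $f_H$, and $g$ is the technical heart of the argument.
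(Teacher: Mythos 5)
Your reduction is the same as the paper's: $F_{Y+Z}(t)-F_{X+Z}(t)=\int \bigl(F_Y(t-z)-F_X(t-z)\bigr)f_Z(z)\,\dd z=(g*f_Z)(t)$ with $g=F_Y-F_X$, $\int g = \EE[X]-\EE[Y]>0$, and the target form of $Z$ (Gaussian mixed with a small heavy-tailed component) matches the structure described in Remark~\ref{thm:gaussian}. But the proposal does not actually prove the one thing that is hard, namely that $f_Z$ can be chosen so that $(g*f_Z)(t)\ge 0$ for \emph{every} $t$ when $g$ is an arbitrary integrable function. Your argument for large $|t|$ only accounts for the bulk of the $g$-mass ("$f_H(t-u)/f_H(t)\to 1$ uniformly on a region carrying most of the mass"), and ignores the contribution from $u\approx t$: out in the tails $g$ can be \emph{negative} (whenever $Y$'s tail is heavier than $X$'s), and that local negative contribution is of order $|g(t)|$ times the mass of $Z$ in a bounded window, which must be beaten by $\Delta\, f_H(t)$. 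So one needs $f_H$ to dominate, pointwise in the tails, a quantity built from $g$ itself; a fixed off-the-shelf density cannot do this. Your concrete prescription for the $\P_\infty$ case fails for exactly this reason: $X,Y\in\P_\infty$ does not imply that $g$ decays faster than any stretched exponential. For example, take $X$ bounded and $Y$ with $\PP[Y>u]=\exp(-(\log u)^2)$ (all moments finite); then $g(t)=-\exp(-(\log t)^2)$ for large $t$, which dominates $\varepsilon\,\Delta\,e^{-|t|^\alpha}$ and the Gaussian term, so with your $f_H\propto e^{-|t|^\alpha}$ one gets $F_{Y+Z}(t)<F_{X+Z}(t)$ for large $t$ and the construction breaks. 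Your closing paragraph concedes that the "joint tuning" of $\sigma,\varepsilon$ and the tail of $f_H$ adapted to an arbitrary $g$ is the technical heart; that heart is missing, so as it stands this is a plan, not a proof.

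For comparison, the paper sidesteps all tail calibration by an exact algebraic device (Lemma~\ref{l.mattner}, due to Ruzsa--Sz\'ekely/Mattner): normalize $g$ into a signed measure $\sigma$ with $\sigma(\RR)=1$, build the heavy-tailed ingredient directly from the data as $\pi\propto|\rho_a*(\sigma-\delta)|$, and take $\nu=\rho_a^{(2)}*\tau$ with $\tau=(1-c)\sum_k c^k\pi^{(k)}$. The identity $c\,\pi*\tau=\tau-(1-c)\delta$ then gives $\sigma*\nu=\rho_a^{(2)}*(\sigma-\delta+c\pi)*\tau+(1-c)\rho_a^{(2)}$, and positivity follows exactly once $a$ is large enough that $c=\sqrt2\,\Vert\rho_a*(\sigma-\delta)\Vert<1$ --- no asymptotic matching between three tails is ever needed, and the moment bookkeeping ($X,Y\in\P_n\Rightarrow Z\in\P_{n-1}$) comes out of the same construction. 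If you want to salvage your route, the fix is to let $f_H$ itself be built from a smoothed version of $|g|$ (plus its self-convolutions, which is what the geometric series accomplishes), rather than from a prescribed parametric family.
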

By taking $\phi$ to be linear, the converse to the result is also true: if there exists a variable $Z$ for which (\ref{eq.thm.1}) holds, then two distributions must satisfy $\EE[X] > \EE[Y]$. When $Z$ has finite expectation, this  follows from the fact that any two variables that are ranked by $>_{\fosd}$ are also ranked by their expectation.




The next result, an immediate corollary of Theorem~\ref{thm1}, shows that large background risk can lead to risk neutral behavior: Given a finite set of gambles, there is a source of background risk such that any agent whose preferences are monotone with respect to first-order stochastic dominance will rank as more preferable gambles with higher expectation.
\begin{corollary}\label{c.finite}
Let $(X_1,\ldots,X_k)$ be gambles in $\P_\infty$ such that $\EE[X_i] > \EE[X_j]$ if $i > j$. Then there is an independent $Z \in \P_\infty$ such that $$X_i + Z >_\fosd X_j + Z \text{~~for all~~} i > j.$$
\end{corollary}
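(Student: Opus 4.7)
The plan is to chain applications of Theorem~\ref{thm1} using the noisier-is-still-dominant observation highlighted in the paper.

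First, for each consecutive pair $i \in \{1,\ldots,k-1\}$, apply Theorem~\ref{thm1} to $X_{i+1}$ and $X_i$ (which have $\EE[X_{i+1}] > \EE[X_i]$ and both lie in $\P_\infty$) to obtain a random variable $Z_i \in \P_\infty$, independent of $(X_{i+1},X_i)$, such that $X_{i+1} + Z_i >_\fosd X_i + Z_i$. Since the conclusion of Theorem~\ref{thm1} only depends on the distribution of $Z_i$, I may realize the collection $Z_1,\ldots,Z_{k-1}$ on a product space so that they are mutually independent and jointly independent of $(X_1,\ldots,X_k)$.

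Next, set $Z = Z_1 + \cdots + Z_{k-1}$. Because each $Z_i$ is in $\P_\infty$ and the $Z_i$'s are independent, $Z \in \P_\infty$ as well (expand $|Z|^n$ by the multinomial theorem and use independence to factor the expectations). Moreover, for each $i$, $Z$ is noisier than $Z_i$ in the sense of the paper, since $Z = Z_i + \sum_{j \neq i} Z_j$ with the second summand independent of $Z_i$ (and also independent of $X_{i+1}, X_i$). By the noisier-is-still-dominant remark preceding Theorem~\ref{thm1}, $X_{i+1} + Z >_\fosd X_i + Z$ for every $i \in \{1,\ldots,k-1\}$.

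Finally, I chain these strict dominances. If $i > j$, then $X_i + Z \geq_\fosd X_{i-1} + Z \geq_\fosd \cdots \geq_\fosd X_j + Z$, so $X_i + Z \geq_\fosd X_j + Z$. The two distributions are distinct because both variables have finite mean (everything is in $\P_\infty$) and $\EE[X_i + Z] = \EE[X_i] + \EE[Z] > \EE[X_j] + \EE[Z] = \EE[X_j + Z]$, hence $X_i + Z >_\fosd X_j + Z$, as desired. The only mildly delicate point is the transitivity of strict first-order stochastic dominance, but once both sides have finite and distinct means this follows immediately.
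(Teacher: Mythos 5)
Your proof is correct and follows essentially the same route as the paper's: apply Theorem~\ref{thm1} to each consecutive pair, take the $Z_i$ mutually independent and independent of $(X_1,\ldots,X_k)$, sum them, invoke the noisier-than observation, and chain by transitivity. Your extra care with closure of $\P_\infty$ under independent sums and with the strictness in the transitivity step (via the distinct means) only makes explicit what the paper leaves implicit.
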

\begin{proof}
  For each $i$ there is an \rev{gamble} $Z_i \in \P_\infty$ independent of $X_{i}$ and $X_{i+1}$ such that $X_{i+1} + Z_i >_\fosd X_i + Z_i$. \rev{We can further assume that each $Z_i$ is independent of $(Z_j)_{j \neq i}$ and of $(X_1,\ldots,X_k)$.}
  Let $Z = Z_1 + \cdots + Z_k$. Then\rev{, as $Z$ is noisier than $Z_i$}, $X_{i+1} + Z_i >_\fosd X_i + Z_i$ implies $X_{i+1} + Z >_\fosd X_i + Z$. Because $>_\fosd$ is transitive, $X_i + Z >_\fosd X_j +Z$ whenever $i > j$.
\end{proof}


Corollary~\ref{c.finite} is reminiscent of the classical Arrow-Pratt approximation. As is well known, any expected utility maximizer, when choosing out of a finite menu of monetary gambles that are sufficiently small, will behave approximately as a risk neutral agent, and select as optimal the gamble with the highest expected value.\footnote{More precisely, consider a set of gambles $\{kX_1,\ldots,kX_n\}$ where $k \geq 0$ measures the size of the risk. Under expected utility and a differentiable utility function, the certainty equivalent of each $kX_i$ is given by $k\EE[X_i]$ plus a term vanishing at rate $k^2$.}
Corollary~\ref{c.finite} establishes a similar conclusion for the case where the gambles under consideration are coupled with a suitable background risk $Z$. It implies that a decision maker who has preferences that are consistent with first-order stochastic dominance---a class much larger than expected utility\footnote{Commonly used examples in this large class are cumulative prospect theory preferences \citep{tversky1992advances}, rank dependent utility \citep{quiggin1991comparative} and cautious expected utility \citep{cerreia2015cautious}.}---will behave like a \text{risk-neutral expected utility maximizer} when facing some (potentially large) background risks. 


Our second main result parallels Theorem \ref{thm1} and establishes an analogous conclusion for second-order stochastic dominance:

\begin{theorem}\label{thm.ssod}
Let $X$ and $Y$ be random variables with finite variance. If $\EE[X] = \EE[Y]$ and $\Var[X]<\Var[Y]$ then there exists a random variable $Z$ that is independent from $X$ and $Y$ and such that
\[
    X + Z >_\sst Y + Z.
\]
In particular, $X + Z' >_{\sst} Y + Z'$ for every $Z'$ that is independent of $X$ and $Y$ and noisier than $Z$. Additionally if $X,Y \in \P_\infty$ then $Z$ can be taken to belong to $\P_\infty$.
\end{theorem}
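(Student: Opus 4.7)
The plan is to characterize the desired SSD relation as a convolution inequality. Using the standard identity $\int_{-\infty}^t F_W(s)\,\dd s = \EE[(t - W)^+]$, the relation $A \geq_\sst B$ (for $A,B$ with finite mean) is equivalent to $\EE[(t - A)^+] \leq \EE[(t - B)^+]$ for every $t \in \RR$. Setting
\[
u(z) \,:=\, \EE[(z - X)^+] - \EE[(z - Y)^+] \,=\, \int_{-\infty}^z (F_X - F_Y)(s)\,\dd s,
\]
and conditioning on $Z$ (independent of $X,Y$), the required $X + Z \geq_\sst Y + Z$ becomes $\EE[u(t - Z)] \leq 0$ for every $t$, with strict inequality at some $t$ in order to secure $>_\sst$. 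I would take $Z$ symmetric, so that the sign of $Z$ is immaterial in this expectation.

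The crux is to establish $\int_\RR u = \tfrac12(\Var X - \Var Y) < 0$ and then exploit this negativity. The key properties of $u$ I would verify are: (i) $u$ is $1$-Lipschitz and continuous; (ii) $u(z) \to 0$ as $z \to \pm \infty$ (at $+\infty$ because $\int_\RR (F_X - F_Y)\,\dd s = \EE[Y] - \EE[X] = 0$); (iii) $u \in L^1(\RR)$, via $|u(z)| \leq \EE[(X - z)^+] + \EE[(Y - z)^+]$ for $z > 0$ together with $\int_0^\infty \EE[(X-z)^+]\,\dd z = \tfrac12 \EE[(X^+)^2]$ (and the symmetric bound for $z < 0$); and (iv) integration by parts, whose boundary terms vanish because $z^2(1-F_X(z)) \to 0$ by finite second moment, yields
\[
\int_\RR u(z)\,\dd z \,=\, -\int_\RR z\,(F_X - F_Y)(z)\,\dd z \,=\, \tfrac12(\Var X - \Var Y) \,<\, 0.
\]

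To turn this strictly negative integral into a pointwise-negative convolution, I would take $Z$ with density $f_Z = (1-\varepsilon)\,f_G + \varepsilon\,f_H$, where $f_G$ is the density of a centered Gaussian with large variance $\sigma^2$ and $f_H$ is a symmetric ``flat-ratio'' density satisfying $f_H(t-y)/f_H(t) \to 1$ as $|t|\to\infty$ for every fixed $y$ (for example $f_H(z)\propto (1+z^2)^{-p}$ when only finite variance is assumed, or $f_H(z)\propto \exp(-|z|^{\alpha})$ with $\alpha\in(0,1)$ when $Z \in \P_\infty$ is required). On a compact window $[-T,T]$, Gaussian smoothing gives $\sigma\sqrt{2\pi}\,\EE[u(t-G)] \to \int u < 0$ uniformly as $\sigma \to \infty$, so the Gaussian summand dominates and contributes $\leq -c/\sigma$; I would then take $\varepsilon$ small compared to $1/\sigma$ so that the heavy-tailed summand cannot overturn this sign on $[-T,T]$. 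For $|t|>T$ the Gaussian term decays super-polynomially, while the heavy-tailed term satisfies
\[
\EE[u(t-H)] \,=\, f_H(t)\int u(y)\,\frac{f_H(t-y)}{f_H(t)}\,\dd y \,\approx\, f_H(t)\int u \,<\, 0,
\]
so the negative sign is inherited in the tails too. Strict negativity at some $t$ forces $X+Z$ and $Y+Z$ to have distinct distributions, producing $>_\sst$.

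The principal obstacle is to obtain the approximation $\EE[u(t-H)] \sim f_H(t)\int u$ \emph{uniformly enough} in $t$ to dominate the small Gaussian residual in the transition region $|t|\approx T$; this requires combining a quantitative tail estimate on $u$ (finite second moment gives $|u(y)|=O(1/|y|)$, which can be improved under stronger moment assumptions) with the flatness of $f_H(t-y)/f_H(t)$ on compacts in $y$, and a careful joint tuning of the parameters $\sigma$, $\varepsilon$, $T$ and the tail index of $f_H$. The remaining assertions of the theorem then follow at once: invariance under a noisier $Z' = Z + W$ holds because $\EE[u(t-Z')] = \EE_W[\EE_Z[u((t-W) - Z)]] \leq 0$; and the $\P_\infty$ refinement is immediate from the choice $f_H(z) \propto \exp(-|z|^{1/2})$, since both mixture components then have all finite moments.
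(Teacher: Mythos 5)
Your reduction is in fact the same as the paper's: with $u(z)=\int_{-\infty}^z(F_X-F_Y)(s)\,\dd s$ you need a noise density $f_Z$ for which the convolution $\EE[u(t-Z)]$ is nonpositive for every $t$ and negative somewhere, and your identity $\int_\RR u=\tfrac12(\Var X-\Var Y)<0$ is exactly the normalization behind the paper's signed measure $\sigma$ (whose density is proportional to $-u$). The gap is in the construction of $Z$, precisely at the point you flag as the ``principal obstacle'': it is not a matter of tuning $\sigma,\varepsilon,T,p$, because for a heavy component of \emph{fixed} shape ($f_H\propto(1+z^2)^{-p}$ or $\propto\exp(-|z|^\alpha)$) the approximation $\EE[u(t-H)]\approx f_H(t)\int u$ is false in general. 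For large $t$ the convolution contains the local term $\int_{|y-t|\le 1}u(y)f_Z(t-y)\,\dd y$, which is of order $u(t)/\sigma$ (the Gaussian part alone puts mass $\asymp 1/\sigma$ on $[t-1,t+1]$), and nothing forces $u\le 0$ far out: take $Y=\pm K$ and $X=\pm1$ plus a small heavy right tail with $\PP[X>s]\asymp s^{-2}(\log s)^{-2}$, so that $\Var X<\Var Y$ yet $u(t)=\EE[(X-t)^+]\asymp t^{-1}(\log t)^{-2}>0$ for all $t>K$. Since any $p>\tfrac12$ gives $f_H(t)\asymp t^{-2p}=o\bigl(t^{-1}(\log t)^{-2}\bigr)$, and the Gaussian weight on the bulk decays like $\ee^{-c(t-K)^2/\sigma^2}$, the positive local term eventually dominates both negative contributions, so $\EE[u(t-Z)]>0$ for large $t$ and your $Z$ fails, whatever the parameters. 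The same phenomenon kills the $\P_\infty$ variant: $u$ can decay like $\exp(-(\log t)^2)$, which is heavier than $\exp(-|t|^{1/2})$, so the claim that this choice ``immediately'' settles the $\P_\infty$ case is also wrong. The moral is that the tail of the heavy component must dominate the tail of $|u|$, which depends on $X,Y$; no fixed flat-ratio family works.

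This is exactly what the paper's key lemma (Ruzsa--Sz\'ekely/Mattner, Lemma 1) supplies: it takes $\pi$ proportional to $|\rho_a*(\sigma-\delta)|$ --- a heavy component \emph{adapted} to $u$, available because $u\in L^1$ --- and sets $\nu=\rho_a^{(2)}*(1-c)\sum_{k\ge0}c^k\pi^{(k)}$. The algebraic identity $(\delta-c\pi)*\tau=(1-c)\delta$ together with $\rho_a^{(2)}\ge\tfrac{1}{\sqrt2}\rho_a$ then yields exact pointwise nonnegativity of $\sigma*\nu$, with no compact-window/transition-region/tail case analysis at all, and the moment bound on $\pi^{(k)}$ gives $Z\in\P_\infty$ when $X,Y\in\P_\infty$. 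So the ``mostly Gaussian plus small weight on heavy noise'' picture you have in mind is right (it is the paper's Remark 1), but the heavy part must be this adapted geometric convolution sum rather than a two-component mixture with a prescribed tail; as written, your proof is incomplete at its essential step, and the specific construction proposed is refuted by the example above. The peripheral parts of your writeup (the SSD criterion via $\EE[(t-\cdot)^+]$, $u\in L^1$, the value of $\int u$, strictness, and the ``noisier $Z'$'' claim) are fine.
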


\subsection{Proof Sketch and Discussion}

Underlying Theorem \ref{thm1} is the following intuition. As is well known, first-order stochastic dominance between $X+Z$ and $Y+Z$ is equivalent to the requirement that the cumulative distribution function (or \emph{cdf}) $F_{Y+Z}$ of the gamble $Y+Z$ is greater, pointwise, than the cdf $F_{X+Z}$ of   $X+Z$.

The assumption that $X$ has higher expectation than $Y$ implies, using integration by parts, that the cdfs of the two random variables must satisfy
\begin{equation}\label{eq.int1}
    \EE[X] - \EE[Y] = \int_{-\infty}^{\infty}(F_Y(t) - F_X(t)) \dd t > 0.
\end{equation}
So, on average, the cumulative distribution function $F_Y$ must lie above $F_X$. Now consider adding an independent random variable $Z$, distributed according to a probability density $f_Z$. Then, given a point $s \in \RR$, the difference between the resulting cdfs can be expressed as
\begin{equation}\label{eq.int2}
    F_{Y+Z}(s) - F_{X+Z}(s) = \int_{-\infty}^\infty (F_Y(t) - F_X(t))f_Z(s-t) \dd t
\end{equation}
If $f_Z$ is sufficiently diffuse (for instance, by taking $Z$ to be uniformly distributed around $s$ over a sufficiently large support)
then it follows from the strict inequality (\ref{eq.int1}) that the difference (\ref{eq.int2}) is positive in a neighborhood of $s$.

So, the crucial difficulty in establishing Theorem \ref{thm1} is to show the existence of a well-defined distribution such that the difference $F_{Y+Z} - F_{X+Z}$ is positive everywhere. The existence of such a distribution $F_Z$ is not trivial. The proof of Theorem \ref{thm1} provides an explicit construction, building on mathematical techniques introduced, in a different context, by \cite{ruzsa1988algebraic}.

While the details of the construction are somewhat technical, the background risk in Theorem \ref{thm1} can be approximated (in terms of total variation distance) by Gaussian noise, up to a vanishing degree of error:

\begin{remark}\label{thm:gaussian}
Let $X$ and $Y$ be as in Theorem \ref{thm1}. For every $\varepsilon \in (0,1)$ \rev{there is a $\sigma^2>0$  and a distribution with cdf $H$ such that if $G$ is the cdf of a Gaussian distribution with zero mean and variance $\sigma^2$, and if $Z$ is independent of $X$ and $Y$ and distributed according to
\[
     F_Z = (1 - \varepsilon)G + \varepsilon H,
\]
then $Z$ satisfies \eqref{eq.thm.1}}.
\end{remark}

In the proof of Theorem \ref{thm1}, the background risk $Z$ is defined as follows. We construct a sequence $U_1,U_2,\ldots$ of i.i.d.\ random variables and an independent geometric random variable $N$ such that $\PP[N=n]=(1-\varepsilon)\varepsilon^n$ for $n \in \{0,1,2,\ldots\}$. Letting $W$ be an independent Gaussian random variable, we define
\begin{equation}\label{eq.intuition}
    Z = W + (U_1 + U_2 + \cdots + U_N).
\end{equation}

So, the random variable $Z$ is obtained as a sum of mean-zero Gaussian noise and geometric sum of independent noise terms. With probability $1 - \varepsilon$ the variable $N$ takes value $0$ and therefore $Z$ reduces to standard Gaussian noise. With probability $(1-\varepsilon)\varepsilon^n$, $n$ additional terms $U_1 + \ldots + U_n$ contribute to the background risk. 

\subsection{Orders of Magnitude}\label{sec:orders}

An immediate question, at this point, is to understand the orders of magnitude involved. In particular, how large does the background risk need to be?
We consider, as a representative example, a decision maker who is confronted with a small gamble $X$ with positive expectation, and the choice of whether to take $X$ or to receive $0$ for sure. We show that a background risk $Z$ that is not implausibly large in many contexts (e.g. risk of unemployment or risk stemming from variation in house prices) suffices to make $X$ dominant. 

In particular, suppose $X$ pays $\$12$ and $-\$10$ with equal probabilities. Then, there exists a background risk $Z$, of the form described in \eqref{eq.intuition}, that satisfies $Z + X >_1 Z$, and has the following properties: its distribution is a mixture that gives weight $1-\varepsilon = 0.99$  to a Gaussian with standard deviation \$3,500. The standard deviation of $Z$ itself is \$3,525. If $X$ pays $\$100$ and -$\$50$ with equal probability, then we can set \rev{$1 - \varepsilon = 0.978$} and the standard deviation of $Z$ is \$4,551.%
\footnote{Further examples can be found in Table \ref{table:computation} in the Appendix. Because the proof is Theorem \ref{thm1} is constructive, these quantities can be easily computed numerically.}

\section{Mean-Variance Preferences}
\label{sec:mean-var}
In this section we apply Theorem \ref{thm.ssod} to provide a simple axiomatization of the classic mean-variance preferences of \cite{markowitz1952portfolio} and  \cite{tobin1958liquidity}.\footnote{A previous axiomatization appears in \cite{epstein1985decreasing}, for DARA preferences. Related results are due to \cite{maccheroni2006ambiguity, maccheroni2009portfolio}.}
We consider a decision maker whose preferences over monetary lotteries are described by a certainty equivalent functional $C \colon \P_\infty \to \RR$ that associates to each lottery $X$ the sure amount of money $C(X)$ that makes her indifferent between $X$ and $C(X)$.  

With slight abuse of notation, we denote by $x$ the constant random variable that takes value $x \in \RR$. For the next result, a gamble $X$ is said to be a \textit{mean preserving spread} of $Y$ if the two have the same expectation and $Y$ second-order stochastically dominates $X$.\footnote{Equivalently, $X$ is smaller than $Y$ in the convex order.}

\begin{proposition}
  \label{p.mean-variance}
  A functional $C \colon \P_\infty \to \RR$ satisfies:
  \begin{enumerate}
      \item {\emph{(Certainty)}} $C(x) = x$ for all $x \in \RR$;
      \item {\emph{(Monotonicity)}} If $X$ is a mean preserving spread of $Y$ then $C(X) \leq C(Y)$; and
      \item {\emph{(Additivity)}} If $X,Y$ are independent then $C(X + Y) = C(X) + C(Y)$;
  \end{enumerate}
 if and only if there exists $k \geq 0$ such that
 \[
 C(X) = \EE[X] - k \Var[X].
 \]
 \end{proposition}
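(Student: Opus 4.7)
The plan is to prove both directions. Sufficiency is routine: if $C(X) = \EE[X] - k\Var[X]$ for some $k \geq 0$, then certainty holds because constants have zero variance, monotonicity holds because a mean-preserving spread preserves the mean and weakly increases the variance, and additivity holds because expectation is linear and variance is additive across independent sums.

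For necessity, I would first extract two structural facts from the axioms. Since a constant $c$ is independent of every random variable, additivity combined with certainty yields translation invariance $C(X + c) = C(X) + c$. More importantly, I would use Theorem~\ref{thm.ssod} to upgrade the monotonicity axiom into a sharper variance principle: if $A, B \in \P_\infty$ satisfy $\EE[A] = \EE[B]$ and $\Var[A] < \Var[B]$, then $C(A) \geq C(B)$. Indeed, Theorem~\ref{thm.ssod} supplies a $Z \in \P_\infty$, independent of $A$ and $B$, such that $A + Z >_\sst B + Z$; since $\EE[A+Z] = \EE[B+Z]$, the variable $B + Z$ is a mean-preserving spread of $A + Z$, so monotonicity gives $C(B+Z) \leq C(A+Z)$, and two applications of additivity cancel the $C(Z)$ term on each side.

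With these in hand, I would pin down $C$ on the class of Gaussians. Let $N_v$ denote a mean-zero Gaussian with variance $v \geq 0$, and set $f(v) := C(N_v)$. Because the sum of two independent Gaussians is a Gaussian with variance equal to the sum, additivity gives $f(u + v) = f(u) + f(v)$ on $\RR_+$; meanwhile, the variance principle makes $f$ non-increasing. A standard Cauchy-equation argument (linearity on $\QQ_+$ from additivity, then extended to all of $\RR_+$ by sandwiching each irrational between rationals via monotonicity) forces $f(v) = -kv$ for some $k \geq 0$.

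The final step extends the formula to an arbitrary $X \in \P_\infty$ with mean $\mu$ and variance $v$. Fix any $a > 0$ and let $N_a$ be independent of $X$; then $X + N_a \in \P_\infty$ has mean $\mu$ and variance $v + a$. For each $\epsilon \in (0, a)$, applying the variance principle to $X + N_a$ against independent mean-$\mu$ Gaussians of variance $v + a \pm \epsilon$ (whose certainty equivalents are computed from the previous step together with translation invariance) yields the sandwich
\[
\mu - k(v + a + \epsilon) \;\leq\; C(X + N_a) \;\leq\; \mu - k(v + a - \epsilon).
\]
Letting $\epsilon \to 0$ gives $C(X + N_a) = \mu - k(v + a)$, and additivity then isolates $C(X) = C(X + N_a) - C(N_a) = \mu - kv$, as required. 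The crux of the argument, I expect, is the bridge step: the monotonicity axiom on its own is too weak to directly compare two gambles by variance, and it is Theorem~\ref{thm.ssod} that lets us promote a bare variance inequality into a genuine second-order dominance relation after adding suitable background noise.
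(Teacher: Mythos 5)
Your proposal is correct, and its first half coincides with the paper's: both use Theorem~\ref{thm.ssod} together with additivity and monotonicity to promote the comparison ``equal mean, strictly smaller variance'' into $C(A)\geq C(B)$, by adding the independent $Z\in\P_\infty$ supplied by the theorem and cancelling $C(Z)$. Where you diverge is in how the functional is then pinned down. The paper first proves that two gambles with the \emph{same} mean and the \emph{same} variance must have the same certainty equivalent, via a contradiction argument that sums $n$ i.i.d.\ copies of each gamble (plus one extra noise term) so that an assumed gap $\varepsilon>0$ is amplified to $n\varepsilon + C(Z)$ while the variance ranking forces the opposite inequality; this shows $C$ restricted to zero-mean gambles is a function $f$ of the variance alone, and only then does it run the Cauchy-equation argument on $f$ over all zero-mean gambles. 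You instead run the Cauchy-equation argument only on the Gaussian family (using stability of Gaussians under independent sums to get additivity of $f$), and recover $C(X)$ for a general $X$ by squeezing $X+N_a$ between mean-$\mu$ Gaussians of variance $v+a\pm\epsilon$ and letting $\epsilon\to 0$, then peeling off $C(N_a)$ by additivity. This is a genuinely different completion: your sandwich step dispenses with the paper's asymptotic i.i.d.-copies contradiction entirely (the fact that equal mean and variance give equal $C$ drops out of the final formula rather than being needed en route), at the mild cost of having to invoke translation invariance and law-invariance of $C$ (the latter follows from monotonicity, since identically distributed gambles dominate each other) when evaluating the comparison Gaussians; the paper's route, by contrast, establishes the ``only mean and variance matter'' fact as an explicit intermediate step and applies the Cauchy equation in full generality rather than just on Gaussians. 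Both arguments are complete and of comparable length.
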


Proposition~\ref{p.mean-variance} characterizes mean-variance preferences by three simple properties. The certainty axiom is necessary for $C(X)$ to be interpreted as a certainty equivalent. Monotonicity requires $C$ to rank as more desirable gambles that are less dispersed around their mean. The additivity axiom says that the certainty equivalent is additive for independent gambles.

A key step in the proof of Proposition \ref{p.mean-variance} is to show that when restricted to mean $0$ random variables, a functional $C$ that satisfies properties (1)-(3) is a decreasing function of the variance. This is an immediate implication of Theorem \ref{thm.ssod}.

\begin{proof}[Proof of Proposition \ref{p.mean-variance}]
  
  It is immediate to verify that properties (1)-(3) are satisfied by the representation. We now prove the converse implication.
  
  Suppose $\EE[X]=\EE[Y]$ and $\Var[X] < \Var[Y]$. Then, by Theorem \ref{thm.ssod}, there exists an independent random variable $Z$ such that $X + Z >_\sst Y + Z$. By monotonicity and additivity, this implies $C(X)+C(Z) = C(X+Z) \geq C(Y+Z) = C(Y)+C(Z)$. Hence, $C(X) \geq C(Y)$.
  
  Suppose $\EE[X] = \EE[Y]$ and $\Var[X] = \Var[Y] = \sigma^2$. We show that $C(X) = C(Y)$. Assume, as a way of contradiction, that $C(X) - C(Y) = \varepsilon > 0$. Let $(X_1,X_2,\ldots)$ and $(Y_1,Y_2,\ldots)$ be i.i.d.\ sequences where each $X_i$ is distributed as $X$, and each $Y_i$ is distributed as $Y$. Fix a random variable $Z$ independent from the two sequences, and such that $\EE[Z] = 0$ and $\Var[Z] > 0$. Because $X_i \geq_\sst X$ and $X \geq_\sst X_i$, then $C(X_i) = C(X)$. Similarly, $C(Y_i) = C(Y)$. By additivity,
  \[
  C(X_1+\cdots+X_n + Z) - C(Y_1+\cdots + Y_n) = n \varepsilon + C(Z),
  \]
  which is strictly positive for $n$ large enough. However, $n \sigma^2 + \Var[Z] = \Var[X_1+\cdots+X_n + Z] > \Var[Y_1+\cdots + Y_n] = n \sigma^2$, and so, by the first part of the proof, we have that $C(X_1+\ldots+X_n + Z) - C(Y_1+\ldots+Y_n)\leq 0$ and we reached a contradiction. Hence $C(X) = C(Y)$.
  
  Thus, restricted to zero mean $X \in \P_\infty$, $C$ satisfies $C(X) = f(\Var[X])$ for some function $f$. In addition, by the first part of the proof, $f$ is nonincreasing. Furthermore, $f$ is additive (i.e., $f(x+y) = f(x)+f(y)$) since $C$ is additive. As is well known, every nonincreasing additive $f \colon \RR_+ \to \RR$ is linear.\footnote{To see this, note that if $m,n \in \mathbb{N}$, then, letting $q = m/n$, we have $mf(1) = f(m) = f(nq) = nf(q)$. Thus, $f(q) = q f(1)$ for every rational $q$. Hence, for every $x \in \RR_+$ and positive rational $q,q'$ such that $q \leq x \leq q'$ we have $q'f(1) = f(q') \leq f(x) \leq f(q) = qf(1)$. So, $f(x)=xf(1)$ for every $x$.}
  To conclude the proof, notice that for any $X$ (with possibly non-zero mean) additivity and certainty imply $C(X - \EE[X]) = C(X) - \EE[X]$. Therefore, letting $k= - f(1)$, we obtain $C(X) = \EE[X] - k\Var[X]$.
\end{proof}

We end this section with an additional result characterizing the expectation as the unique functional on $\P_\infty$ that is monotone with respect to first order stochastic dominance, and additive for independent random variable.
\begin{proposition}
  \label{p.mean}
  A functional $C \colon \P_\infty \to \RR$ satisfies:
  \begin{enumerate}
      \item {\emph{(Certainty)}} $C(1) = 1$;
      \item {\emph{(Monotonicity)}} If $X >_\fosd Y$ then $C(X) \geq C(Y)$; and
      \item {\emph{(Additivity)}} If $X,Y$ are independent then $C(X + Y) = C(X) + C(Y)$;
  \end{enumerate}
 if and only if $C(X) = \EE[X]$.
\end{proposition}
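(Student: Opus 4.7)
The plan is to follow the two-step template from the proof of Proposition~\ref{p.mean-variance}: first determine $C$ on the constant lotteries, then use Theorem~\ref{thm1} to squeeze $C(X)$ between $\EE[X] - \varepsilon$ and $\EE[X] + \varepsilon$ for every $\varepsilon > 0$. The ``if'' direction is immediate, since the expectation satisfies all three axioms.

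For the first step, note that any constant random variable is independent of every other random variable, so additivity yields $C(x+y) = C(x) + C(y)$ on $\RR$, and monotonicity forces $C$ restricted to $\RR$ to be nondecreasing (for real $x > y$ one has $x >_{\fosd} y$). An additive, nondecreasing function on $\RR$ is linear by the same rational-sandwich argument used in the footnote of Proposition~\ref{p.mean-variance} (combined with $C(-x) = -C(x)$, obtained from $C(0) = C(0+0) = 2C(0)$ and $C(x)+C(-x)=C(0)=0$, to cover negative reals). The normalization $C(1) = 1$ then pins $C$ down to $C(x) = x$ for every $x \in \RR$.

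For the second step, fix $X \in \P_\infty$ with mean $\mu$ and an arbitrary $\varepsilon > 0$. Since $\EE[X] = \mu > \mu - \varepsilon$ and the constant $\mu - \varepsilon$ belongs to $\P_\infty$, Theorem~\ref{thm1} supplies a $Z \in \P_\infty$, independent of $X$, such that $X + Z >_{\fosd} (\mu - \varepsilon) + Z$. Monotonicity gives $C(X+Z) \geq C((\mu-\varepsilon)+Z)$, and additivity combined with Step~1 (which gives $C(\mu-\varepsilon) = \mu-\varepsilon$) allows us to cancel $C(Z)$ from both sides, yielding $C(X) \geq \mu - \varepsilon$. The symmetric application of Theorem~\ref{thm1} to the pair $(\mu + \varepsilon, X)$ produces the matching bound $C(X) \leq \mu + \varepsilon$. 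Letting $\varepsilon \to 0$ gives $C(X) = \EE[X]$.

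No serious obstacle is anticipated, since Theorem~\ref{thm1} does the real work. The only points that merit a moment's care are ensuring all arguments to $C$ lie in its stated domain $\P_\infty$, which is handled by the last clause of Theorem~\ref{thm1} together with the fact that constants belong to $\P_\infty$, and the routine extension of the linearity argument for constants from $\RR_+$ to all of $\RR$ noted above.
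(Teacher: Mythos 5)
Your proposal is correct and follows essentially the same route as the paper's own proof: pin down $C$ on constants via additivity, monotonicity and the certainty axiom, then invoke Theorem~\ref{thm1} against the constants $\EE[X]\pm\varepsilon$ and cancel $C(Z)$ by additivity to sandwich $C(X)$ at $\EE[X]$. The only difference is that you spell out the extension of the linearity argument to negative constants, which the paper leaves implicit.
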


\begin{proof}[Proof of Proposition \ref{p.mean}]
  This proof closely follows that of Proposition \ref{p.mean-variance}. As in that proof, it is immediate to verify that properties (1)-(2) are satisfied by the representation. 
  
  Denote (as above) by $x$ the random variable that take the value $x \in \RR$ with probability 1, and define $f \colon \RR \to \RR$ by $f(x) = C(x)$. By additivity and monotonicity $f$ is monotone increasing and additive and so, as in the proof of Proposition~\ref{p.mean-variance}, $f(x) = k x$ for some $k \geq 0$. Certainty implies $k = 1$.
  
  We show that if $\EE[X] > \EE[Y]$ then $C(X) \geq C(Y)$. Indeed, in this case, by Theorem~\ref{thm1} there is an independent $Z \in \P_\infty$ such that $X+Z >_\fosd Y+Z$, and so by monotonicity $C(X+Z) \geq C(Y+Z)$. By additivity it follows that $C(X) \geq C(Y)$. Hence for any $\varepsilon>0$ it holds that 
  $$C(X) \leq C(\EE[X]+\varepsilon) = f(\EE[X]+\varepsilon)$$ 
  and 
  $$C(X) \geq C(\EE[X]-\varepsilon) = f(\EE[X]-\varepsilon).$$ 
  Thus $C(X) = f(\EE[X]) = \EE[X]$.
\end{proof}

\section{Implementation with Risk Aversion}
\label{sec:auctions}
In this section we show how Theorem~\ref{thm1} can be used to construct mechanisms that are robust to uncertainty about the agents' risk attitudes. 
%

Consider a mechanism design problem with $n$ agents, where each agent's type is $\theta_i \in \Theta_i$, and $\theta=(\theta_1,\ldots,\theta_n)$ is drawn from some joint distribution. We assume that the set of types $\Theta = \prod_i \Theta_i$ is finite. The designer chooses an allocation $x \in X$ and a transfer $t_i \in \RR$ for each agent $i$. By the revelation principle we can without loss of generality restrict to mechanisms where each agent reports their type and it is optimal for the agents to be truthful.
A direct mechanism $(x,t)$ is a tuple consisting of an allocation function $x: \prod_{i=1}^n \Theta_i \to X$ and a transfer function $t: \prod_{i=1}^n \Theta_i \to \Delta(\RR^n$). We restrict attention to mechanisms where each random transfer $t_i(\theta)$ has all moments. Most of the literature on mechanism design focuses on the quasi-linear case, where agent $i$'s utility is given by
\[
    v_i(x, \theta) - t_i \,.
\]
Here, $v_i(x,\theta)$ denotes the monetary equivalent of the utility agent $i$ derives from the physical allocation $x$ when the type profile equals $\theta=(\theta_1,\ldots,\theta_n$). Note, that as $v$ depends on the complete type profile we allow for interdependent values.

A restriction which is imposed when assuming quasi-linear preferences is that the agents are risk neutral over money. To more generally model agents who might not be risk neutral, we assume that preferences are of the form
\begin{equation}\label{eq:utility}
        u_i \left( v_i(x, \theta) - t_i \right)\,,
\end{equation}
where $u_i : \RR \to \RR$ is agent $i$'s utility function over money. We assume $u_i$ is bounded by a polynomial; see Section~\ref{sec:Discussion} for a discussion of this integrability assumption. Preferences \eqref{eq:utility} are commonly considered in the literature on auctions with risk aversion \citep[see for example case 1.\ in][]{maskinriley1984} or in the literature on optimal income taxation \citep[see for example][]{diamond1998}. 

A mechanism $(x,t)$ is \emph{Bayes incentive compatible} if for every type $\theta_i$ and every agent $i$ it is optimal to report her type truthfully to the mechanism, given that all other players do the same
\begin{equation}\label{eq:u-bic}
    \theta_i \in \argmax_{\hat{\theta}_i} \EE \Big[u_i \left(\,v_i(x(\hat{\theta}_i,\theta_{-i}), \theta) - t_i (\hat{\theta}_i, \theta_{-i}) \,\right) \mid \theta_i\Big] \tag{$u$-BIC}
\end{equation}
A mechanism is Bayes incentive compatible (BIC) \textit{under quasi-linear preferences} if it satisfies \eqref{eq:u-bic} when all agents' utilities over money are linear, i.e. $u_i(x)=x$. 
A mechanism is \emph{strictly} Bayes incentive compatible if the maximum in \eqref{eq:u-bic} is unique.

Bayes incentive compatibility depends on the utility functions $u=(u_1,\ldots,u_n)$ and thus the risk attitudes of the agents. 
We are interested in finding mechanisms 
which implement a given physical allocation $x$ for any vector of 
utilities. 
%
%
 To explicitly model this problem we consider the stronger notion of \emph{ordinal incentive compatibility} \citep{d1988ordinal}.

\begin{definition} A direct mechanism $(x,t)$ is ordinal incentive compatible if \eqref{eq:u-bic} is satisfied for all non-decreasing utility functions $u_1,\ldots,u_n$. 
\end{definition}
It is natural to ask which physical allocation rules $x$ can be implemented ordinally.\footnote{A first observation is that every mechanism which is ex-post or dominant strategy IC is also ordinally IC.} Our main result in this section is that (essentially) any allocation that can be implemented when the agents have quasi-linear utilities 
can also be implemented when the agents have arbitrary utilities $u$ that are unknown to the designer. Thus, perhaps surprisingly, knowledge of the agents risk attitudes is not necessary to implement a given allocation. To make a Bayes IC mechanism ordinally IC it suffices to add a carefully chosen risk to the transfer that is independent of the agents' reports. 

\begin{proposition}\label{prop:robust-implementation}
 Suppose the direct mechanism $(x,t)$ is strictly Bayes incentive compatible with quasi-linear preferences. Then, there exists another \textbf{ordinally incentive compatible} direct mechanism $(x,\tau)$ that implements the same physical allocation $x$ and raises the same expected revenue from each agent given each vector of reported types $\hat{\theta}$, i.e., $
 	\EE[\tau_i(\hat{\theta})] = \EE[t_i(\hat{\theta})]$ for every $i$.
\end{proposition}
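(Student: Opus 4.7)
The plan is to define $\tau_i(\hat\theta):=t_i(\hat\theta)+R_i$, where $R_i$ is a mean-zero random variable in $\P_\infty$ drawn independently of the type profile $\theta$ (and independently across agents). Expected revenue is then preserved, since $\EE[\tau_i(\hat\theta)]=\EE[t_i(\hat\theta)]+\EE[R_i]=\EE[t_i(\hat\theta)]$. Fix an agent $i$, a true type $\theta_i$, and an alternative report $\hat\theta_i\neq\theta_i$, and set
\[
A_{\hat\theta_i}\;:=\;v_i(x(\hat\theta_i,\theta_{-i}),\theta)-t_i(\hat\theta_i,\theta_{-i}),
\]
viewed, conditional on $\theta_i$, as a bounded random variable in $\theta_{-i}$ (bounded since $\Theta$ is finite and $x$ takes finitely many values). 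Expanding \eqref{eq:u-bic} and using independence of $R_i$ from $\theta$, ordinal IC of $(x,\tau)$ is equivalent to
\[
A_{\theta_i}-R_i \;\geq_\fosd\; A_{\hat\theta_i}-R_i \qquad\text{(conditional on }\theta_i\text{)}
\]
for every such triple $(i,\theta_i,\hat\theta_i)$. Strict BIC of $(x,t)$ under quasi-linear preferences is precisely the strict mean inequality $\EE[A_{\theta_i}\mid\theta_i]>\EE[A_{\hat\theta_i}\mid\theta_i]$, which is exactly the hypothesis of Theorem~\ref{thm1}.

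Applying Theorem~\ref{thm1} to each pair $(A_{\theta_i},A_{\hat\theta_i})$ (which lies in $\P_\infty$ since both are bounded) furnishes an independent noise $W_{i,\theta_i,\hat\theta_i}\in\P_\infty$ with $A_{\theta_i}+W_{i,\theta_i,\hat\theta_i}>_\fosd A_{\hat\theta_i}+W_{i,\theta_i,\hat\theta_i}$; because FOSD is shift-invariant, I may assume $\EE[W_{i,\theta_i,\hat\theta_i}]=0$. To collapse this family of pair-specific noises into a \emph{single} $R_i$ that works simultaneously for every $(\theta_i,\hat\theta_i)$, I use the convolution device from the proof of Corollary~\ref{c.finite}. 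Take the (finitely many) $W_{i,\theta_i,\hat\theta_i}$ to be mutually independent and jointly independent of $\theta$, and set
\[
R_i\;:=\;-\!\!\sum_{(\theta_i,\hat\theta_i):\,\hat\theta_i\neq\theta_i} W_{i,\theta_i,\hat\theta_i}.
\]
This $R_i$ has mean zero, lies in $\P_\infty$, and is independent of $\theta$. For any fixed pair, $-R_i$ decomposes as $W_{i,\theta_i,\hat\theta_i}$ plus an independent remainder, so $-R_i$ is \emph{noisier than} $W_{i,\theta_i,\hat\theta_i}$ in the sense of Section~\ref{sec:defs}; the noisier-than remark recorded there upgrades the pairwise FOSD to the required $A_{\theta_i}-R_i>_\fosd A_{\hat\theta_i}-R_i$.

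Finally, I would check the integrability qualifier implicit in $\geq_\fosd$: since each $u_i$ is bounded by a polynomial, $A_{\hat\theta_i}$ is bounded, and $R_i\in\P_\infty$, the expectation $\EE[u_i(A_{\hat\theta_i}-R_i)\mid\theta_i]$ is finite, so the FOSD statement actually delivers $\EE[u_i(A_{\theta_i}-R_i)\mid\theta_i]\geq\EE[u_i(A_{\hat\theta_i}-R_i)\mid\theta_i]$ for every non-decreasing $u_i$, which is \eqref{eq:u-bic}. I expect the main obstacle of the argument to be precisely the first step---turning a strict ranking in expectations into first-order stochastic dominance via independent background noise---which is exactly the content of Theorem~\ref{thm1}; the remainder is a reduction and the Corollary~\ref{c.finite}-style convolution trick.
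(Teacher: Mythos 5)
Your proposal is correct and follows essentially the same route as the paper: add an independent mean-zero noise in $\P_\infty$ to the transfers, use Theorem~\ref{thm1} to turn the strict interim expectation ranking from strict BIC into first-order stochastic dominance, and collapse the pair-specific noises into one via the sum/``noisier than'' device. The only (immaterial) difference is that you invoke Theorem~\ref{thm1} per ordered pair $(\theta_i,\hat\theta_i)$ and sum over all pairs, whereas the paper invokes Corollary~\ref{c.finite} once per true type and sums over types---the underlying convolution argument is identical.
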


\begin{proof}
The proof of Proposition \ref{prop:robust-implementation} is a direct application of Corollary \ref{c.finite}. Fix agent $i$'s type to be $\theta_i$. For each $\hat{\theta}_i \in \Theta_i$ denote by $\mu_{\hat{\theta}_i}$ the distribution of $v_i(x(\hat{\theta}_i,\theta_{-i}),\theta)-t_i(\hat{\theta}_i,\theta_{-i})$ (i.e., the monetary utility of $i$ when reporting $\hat{\theta}_i$) conditioned on the true type $\theta_i$. Let $X_{\hat{\theta}_i}$ be distributed according to $\mu_{\hat{\theta}_i}$. Strict Bayes incentive compatibility says that
$$
  \EE[X_{\theta_i}] > \EE[X_{\hat{\theta}_i}]
$$
whenever $\hat{\theta}_i \neq \theta_i$. By Corollary \ref{c.finite} there thus exists an independent random variable $Z_{\theta_i}$ such that $X_{\theta_i}+Z_{\theta_i} >_1 X_{\hat{\theta}_i}+Z_{\theta_i}$. Furthermore, since the random variables $(X_{\hat{\theta}_i})$ have all moments, we can assume that $Z_{\theta_i'}$ does too, and has zero mean. By repeating this construction for each player $i$ and each true type $\theta_i$, we obtain a collection $(Z_{\theta_i})$ of random variables which we can assume to be independent. Let $Z_i = \sum_{\theta_i \in \Theta_i}Z_{\theta_i}$. Then it still holds that $X_{\theta_i}+Z_i >_1 X_{\hat{\theta}_i}+Z_i$
for every every $\theta_i$, and so the mechanism $(x,\tau)$ with $\tau_i(\hat{\theta}) = t_i(\hat{\theta}) + Z_i$ is ordinal incentive compatible. In addition, $\EE[\tau_i(\hat{\theta})] = \EE[t_i(\hat{\theta})]$ since $Z_i$ has zero mean.
%
\end{proof}


The result has the following additional implication. As $(x,t)$ and $(x,\tau)$ implement the same physical allocation $x$, it follows that the set of strictly implementable physical allocations is the same under Bayes IC and ordinal IC. This is in contrast to dominant strategy IC for which the set of implementable physical allocations is strictly smaller.\footnote{%
Note, that as \cite{gershkov2013equivalence} show, the set of implementable interim utilities under some conditions is the same under dominant strategy and Bayesian incentive compatibility even if the set of implementable allocation rules differs.}



In our analysis we have abstracted away from individual rationality. A mechanism that is individually rational when agents are risk neutral is not necessarily individually rational when $u_i$ is concave. 
The addition of background noise may violate individual rationality for risk averse agents, and thus require the agents to be compensated to ensure their participation. This, however, 
would not change the set of implementable physical allocation rules.

\section{Discussion}\label{sec:Discussion}
\label{sec:discussion}


\paragraph{Unbounded Support and Tail Risk} We first observe that the noise term $Z$ in Theorem~\ref{thm1} cannot have bounded support, unless additional assumptions are imposed on $X$ and $Y$. This holds even if we only consider bounded $X$ and $Y$. Indeed, in order for $X + Z >_1 Y + Z$ to be satisfied, the maximum of the support of $X + Z$ must lie weakly above that of $Y + Z$. In particular, if $Z$ has bounded \rev{support} and $X + Z >_1 Y + Z$, then the maximum of the support of $X$ must be greater than the maximum of the support $Y$.

In addition, the background risk $Z$ must generally display non-negligible risk at the tails.
For instance, it is impossible for a Gaussian $Z$ to satisfy Theorem~\ref{thm2}; this holds even if we restrict the random variables in question to be finitely supported.  Indeed, any $Z$ that satisfies Theorem~\ref{thm2} must have thick tails in the sense that $\EE[\exp(tZ)] = \infty$ for some $t \in \RR$ large enough. 

To see the necessity for thick tailed distributions, note that if the maximum of the support of $X$ is strictly less than that of $Y$ (which of course does not preclude that $\EE[X] > \EE[Y]$) then  $\EE[\exp(tX)] < \EE[\exp(tY)]$ for $t$ large enough. Hence for any independent $Z$ for which $\EE[\exp(tZ)]$ is finite it holds that
$$\EE\left[\ee^{t(X+Z)}\right] = \EE\left[\ee^{tX}\right] \cdot \EE\left[\ee^{tZ}\right] < \EE\left[\ee^{tY}\right] \cdot \EE\left[\ee^{tZ}\right]  = \EE\left[\ee^{t(Y+Z)}\right],$$
and so it cannot be that $X+Z >_\fosd Y+Z$.

\paragraph{Integrability} As is well known, distributions with unbounded support require the specification of a class of utility functions for which all expectations are finite, so that issues similar to the St.\ Petersburg paradox do not arise. 

A standard solution is to restrict attention to utility functions that are bounded. See, e.g., \citet{aumann1977st}. More generally, following \citet{russell1978admissible} \citep[see also][]{ryan1974use, arrow1974use}, one may wish to consider utility functions that have polynomial tails. \rev{That is, utility functions $u : \mathbb{R} \to \RR$ that are bounded, in absolute value, by a polynomial.}
This assumption is more general and allows for utility functions that are strictly increasing and strictly concave everywhere.\footnote{Given any utility function $u : \RR \to \RR$ and an arbitrarily large bounded interval, there exists a utility $v$ that agrees with $u$ on that interval and can be taken to be bounded or with polynomial tails. So, both assumptions amount to non-falsifiable integrability conditions. %
}
For any such $u$ and gamble $X \in \P_\infty$ with all finite moments, the resulting expected utility $\EE[u(X)]$ is well-defined and finite.

By definition, distributions with thick tails rule out CARA as a class of utility functions for which integrability is preserved. This, however, does not make thick tailed distributions pathological. Many standard distributions have thick tails: examples include geometric, exponential and gamma distributions. All of these have finite moments and exponentially vanishing tails.  Distribution with thick tails have been used to describe many economic variables of interest \citep{acemoglu2017microeconomic}, and are the subject of a growing literature in economics \citep[see, e.g., ][]{morris2016crises}.

\newcommand{\possessivecite}[1]{\citeauthor{#1}'s \citeyearpar{#1}}

\rev{\paragraph{Calibration} \possessivecite{rabin2000risk} calibration theorem famously highlighted a tension between expected utility and risk aversion over small gambles. Subsequent literature clarified that the conclusions of Rabin's impossibility theorem extend to a broader class of preferences. In particular, \cite{barberis2006individual} showed how independent background risk can eliminate first-order risk aversion, a feature that in the absence of background risk can, under some conditions, accommodate risk-aversion over small gambles. Our paper contributes to this literature by showing how background risk can lead to risk-neutral behavior, for any monotone preference, and for a suitable choice of background risk.
}

\paragraph{Conclusion} Our main theorem establishes a connection between two orderings---having strictly greater mean and first-order stochastic dominance---that differ substantially in their strength and implications. An interesting avenue for future research is to study more quantitative versions of our result and to what extent the stark predictions implied by our results are reflected in actual behavior.


\bibliography{main}

\newpage 
\appendix
\section{Proof of Theorem \ref{thm1}}
\label{sec:proof-1}
Denote by $\P_n$ the collection of all Borel probability measures on $\RR$ that have finite $n$th moment:
\[
   \P_n = \left\{\nu \,:\, \int_{\RR}|x|^n\,\dd \nu(x) < \infty \right\}.
\]
We denote $\P_\infty = \bigcap_n \P_n$. Likewise, denote by $\M_n$ the collection of all bounded Borel signed measures on $\RR$ that have finite $n$th moment. That is, $\mu \in \M_n$ if $\int_{\RR}|x|^n\,\dd \mu(x) < \infty$. Recall that a signed measure $\mu$ is bounded if its absolute value $|\mu|$ is a finite measure. As usual, given $\mu,\nu \in \M_n$ we write $\mu \geq \nu$ if $\mu(A) \geq \nu(A)$ for every Borel $A \subseteq \RR$. We equip $\M_n$ with the total-variation norm 
\[
 \Vert \mu \Vert = \int_{\RR} 1\,\dd|\mu|.
\]
We denote the convolution of $\mu_1,\mu_2 \in \M_n$ (and in its subset $\P_n$) by $\mu_1*\mu_2$, and by $\mu^{(k)}$ the $k$-fold convolution of $\mu$ with itself for all $k\geq 1$. To simplify notation we define $\mu^{(0)}$ to be $\delta$, the Dirac measure at zero, so that $\mu^{(k)}*\mu^{(m)} = \mu^{(k+m)}$ for all $k,m \geq 0$. Note that $\M_n$, equipped with the norm defined above, is a Banach space which is closed under convolutions. In fact, $(\M_n,*,+)$ is a Banach algebra, so that $\mu * (\nu_1+\nu_2) = \mu * \nu_1 + \mu * \nu_2$.

The following lemma is due to \citet[pp. 126--127]{ruzsa1988algebraic}. It states that a signed measure that assigns total mass $1$ to $\RR$ can be ``smoothed'' into a probability measure by convolving it with an appropriately chosen probability measure. We provide the proof for the reader's convenience; an essentially identical proof also appears in \citet[p.\ 616]{mattner1999what}, as well as in \citet[p.\ 159]{mattner2004cumulants}. 
\begin{lemma}[Ruzsa and Sz{\'e}kely, Mattner]\label{l.mattner}
Let $n \in \{0,1,\ldots,\infty\}$. For every $\mu \in \M_n$ with $\mu(\RR)=1$ there is a $\nu \in \P_n$ such that $\mu * \nu \in \P_n$.
\end{lemma}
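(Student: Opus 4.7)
My strategy is to apply the Jordan decomposition to $\mu$ and then build $\nu$ as a Gaussian smoothing of a geometric mixture of convolution powers of the probability measure associated to $\mu^-$. First, write $\mu = \mu^+ - \mu^-$; both lie in $\M_n$ because they are dominated by $|\mu|$. Setting $\alpha = \mu^-(\RR)$, we have $\mu^+(\RR) = 1+\alpha$. If $\alpha = 0$, then $\mu$ is itself a probability measure in $\P_n$ and I take $\nu = \delta$, the Dirac at $0$. Hereafter assume $\alpha > 0$; introduce the probability measures $\pi = \mu^-/\alpha$ and $\pi^+ = \mu^+/(1+\alpha)$, both in $\P_n$, together with the parameter $\epsilon = \alpha/(1+\alpha) \in (0,1)$.

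Next, define the probability measure
\[
\nu_0 \;=\; (1-\epsilon)\sum_{k=0}^\infty \epsilon^k\,\pi^{(k)},
\]
the law of a random geometric sum of i.i.d.\ copies of $\pi$. That $\nu_0 \in \P_n$ follows from Minkowski's inequality $\int|x|^n\,d\pi^{(k)} \leq k^n \int|x|^n\,d\pi$ together with the summability of $\epsilon^k k^n$. The algebraic key is the resolvent identity $\epsilon\,\pi * \nu_0 = \nu_0 - (1-\epsilon)\delta$, immediate from the series definition. Substituting into $\mu*\nu_0 = \mu^+*\nu_0 - \alpha\,\pi*\nu_0$ and using $\alpha/\epsilon = 1+\alpha$ together with $(1-\epsilon)(1+\alpha)=1$ yields
\[
\mu * \nu_0 \;=\; (1+\alpha)(\pi^+ - \delta) * \nu_0 \;+\; \delta.
\]
The second summand is the desired unit atom at $0$; the first is a signed measure of total mass zero.

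Finally, take $\nu = \phi_\sigma * \nu_0$, where $\phi_\sigma$ is a mean-zero Gaussian density of variance $\sigma^2 > 0$; then $\nu \in \P_n$ (indeed $\P_\infty$). Convolving the identity above by $\phi_\sigma$ produces the smooth density
\[
\mu * \nu \;=\; \phi_\sigma \;+\; (1+\alpha)\,\phi_\sigma * (\pi^+ - \delta) * \nu_0,
\]
and the task reduces to choosing $\sigma$ (and possibly $\epsilon$) large enough that this density is everywhere nonnegative; if so, $\mu*\nu$ is a positive measure of mass $1$, hence in $\P_n$.

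I expect the pointwise positivity check to be the main obstacle. On the bulk $|s|\lesssim\sigma$, Taylor expansion of $\phi_\sigma$ shows that $\phi_\sigma * (\pi^+ - \delta)$ is of order $\phi_\sigma/\sigma$, so the positive leading term $\phi_\sigma$ dominates for large $\sigma$. In the far tails, however, $\phi_\sigma$ decays Gaussianly while $\nu_0$ only has exponentially thin tails inherited from the geometric mixture; controlling the ratio there requires a careful tuning of $\sigma$ and $\epsilon$ against the moments of $\pi$ and $\pi^+$. This is the technical core of the Ruzsa--Sz\'ekely argument and explains why the construction must be tailored to $\mu$ rather than being universal.
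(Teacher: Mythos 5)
Your algebraic skeleton (Jordan decomposition, geometric resolvent identity $\epsilon\,\pi*\nu_0=\nu_0-(1-\epsilon)\delta$, Gaussian smoothing) is close in spirit to the paper's argument, and the identity $\mu*\nu_0=(1+\alpha)(\pi^+-\delta)*\nu_0+\delta$ is correct. But the step you yourself flag as ``the main obstacle'' is not a technicality to be tuned away: it is a genuine gap, and with your choice of ingredients it cannot be closed. The problem is in the tails. Take, for instance, $\mu=(1+\alpha)\delta_{-1}-\alpha\,\delta_{1}$. Then $\pi=\delta_1$, $\nu_0=(1-\epsilon)\sum_k\epsilon^k\delta_k$, and $(1+\alpha)(\pi^+-\delta)*\nu_0$ carries a \emph{negative} atom of size of order $\epsilon^{m}$ at every integer $m\geq 0$. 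After convolving with $\phi_\sigma$, the negative part of the density at a point $s=m$ decays only geometrically in $m$, while your positive cushion $\phi_\sigma(s)$ (and the lone positive atom near $-1$) decays at a Gaussian rate. Hence for every $\sigma$ the density of $\mu*\nu$ is negative for all large $s$: no choice of $\sigma$ rescues positivity, and $\epsilon$ is pinned to $\alpha/(1+\alpha)$ by your algebra, so it cannot be tuned either. (Your bulk estimate also implicitly uses moments of $\pi^+$, which are unavailable when $n=0$, and the parenthetical claim that $\nu\in\P_\infty$ is unwarranted, since $\nu_0$ only inherits the $n$ moments of $\pi$.)

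The paper's proof avoids this by choosing the measure that feeds the geometric series \emph{after} smoothing, and in a way that makes positivity exact rather than asymptotic: it sets $\pi$ proportional to $|\rho_a*(\mu-\delta)|$, with $c=\sqrt{2}\,\Vert\rho_a*(\mu-\delta)\Vert$, so that $\tfrac{\sqrt{2}}{c}\rho_a*(\mu-\delta)+\pi\geq 0$ holds pointwise by construction -- the added probability measure dominates the negative part wherever it sits, including in the tails. The smoothing parameter $a$ is then used for one purpose only: by a total-variation estimate (Pinsker's inequality), $\Vert\rho_a*(\mu-\delta)\Vert\to 0$ as $a\to\infty$, so $a$ can be chosen to make $c<1$ and the geometric series convergent. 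In other words, the free parameter controls the \emph{mass} of the correction, not a pointwise race between a Gaussian tail and the tail of $\nu_0$, which is exactly the race your construction loses. To repair your argument you would have to replace $\pi=\mu^-/\alpha$ by a measure adapted to the smoothed remainder in this way, at which point you have essentially reconstructed the Ruzsa--Sz\'ekely/Mattner proof given in the paper.
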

\begin{proof}
  Let $\rho_a$ be the measure of a Gaussian random variable with mean zero and standard deviation $a > 0$.\footnote{One could take here any other distribution (e.g., the uniform probability distribution on $[-a,a]$) such that $\rho_a^{(2)} \geq \beta\rho_a$ for some $\beta>0$.}.  For some $0 < c < 1$ and some $\pi \in \P_n$ let
  \[
    \tau = (1-c)\sum_{k=0}^\infty c^k \pi^{(k)},
  \]
  where $\pi^{(0)}$ is $\delta$, the Dirac measure at zero. Since $c<1$ the series converges and so $\tau$ is a probability measure.
  Let
  \[
    \nu = \rho_a^{(2)} * \tau.
  \]
  We show that $\mu * \nu \in \P_n$ for an appropriate choice of $a,c$ and $\pi$. To see that $\mu * \nu$ is a probability measure note first that $[\mu * \nu](\RR)=1$,\footnote{This follows immediately from the fact that $\int_{-\infty}^\infty \dd (\mu * \nu)(x) = \int_{-\infty}^\infty \dd \mu (x) \times \int_{-\infty}^\infty \dd \nu (x)\,.$} and so it suffices to show that $\mu * \nu$ is positive. To see this, we write
  \begin{eqnarray*}
    \mu * \nu &=& \rho_a^{(2)} * \mu * \tau\\
    &=& \rho_a^{(2)} * (\mu -\delta + \delta - c \pi + c \pi) * \tau\\
    &=& \rho_a^{(2)} * (\mu -\delta + c \pi)*\tau +\rho_a^{(2)} * (\delta - c \pi )*\tau.
  \end{eqnarray*}
  Now, by the definition of $\tau$, 
  \[
    c \pi * \tau = (1-c)\sum_{k=0}^\infty c^{k+1}\pi^{(k+1)} = \tau-(1-c)\delta.
  \]
  Hence $(\delta - c \pi )*\tau = (1-c)\delta$, and so
  \[
    \mu * \nu = \rho_a^{(2)} * (\mu -\delta + c \pi)*\tau + (1-c)\rho_a^{(2)}.
  \]
  It follows by comparing the densities of $\rho_a^{(2)}$ and $\rho_a$ that $\rho_a^{(2)} \geq \frac{1}{\sqrt{2}}\rho_a$. Hence
  \begin{eqnarray}
    \rho_a^{(2)} * (\mu -\delta + c \pi) 
    &\geq& \rho_a^{(2)} * (\mu - \delta) + \frac{c}{\sqrt 2} \rho_a * \pi\\
    &=& \frac{c}{\sqrt 2}  \rho_a * \left[\frac{\sqrt 2}{c} \rho_a * (\mu -\delta) +  \pi\right]. 
  \end{eqnarray}
  Thus, if we choose $a, c$ and $\pi$ so that $\frac{\sqrt 2}{c} \rho_a * (\mu -\delta) +  \pi$ is a positive measure it will follow that $\mu * \nu$ is also positive. To this end we set
  \[
    \frac{c}{\sqrt 2} = \Vert \rho_a*(\mu-\delta) \Vert.
  \]
  and
  \[
    \pi = \frac{\sqrt 2}{c}|\rho_a * (\mu - \delta)| = \frac{1}{\Vert \rho_a * (\mu - \delta) \Vert }|\rho_a * (\mu - \delta)|,
  \]
  and if $0 < c < 1$. If $c=0$ for some $a$ then $\rho_a*\mu=\rho_a$ and we can take $\nu=\rho_a$ to conclude the proof of the theorem. In addition, $c = \sqrt 2 \Vert (\mu-\delta)*\rho_a \Vert$ tends to $0$ as $a$ tends to infinity,%
  \footnote{Let $\delta_y$ be the point mass at $y$. Then $\delta_y * \rho_a$ is a Gaussian distribution with mean $y$ and standard deviation $a$. The Kullback-Leibler divergence between the two is well-known to be $D(\delta_y * \rho_a \Vert \rho_a) = \frac{1}{2} \left(\frac{y}{a}\right)^2$. By Pinsker's Inequality,
\[
    \Vert \delta_y * \rho_a - \rho_a \Vert \leq \sqrt{2 D(\delta_y * \rho_a \Vert \rho_a)} = \frac{|y|}{a}\cdot
\]
Hence $\Vert \delta_y * \rho_a - \rho_a \Vert \leq \min\{2,  |y|/a\}$. Because $\mu(\RR)=1$, $[\mu * \rho_a](x)-\rho_a(x) = \int [\delta_y * \rho_a](x)-\rho_a(x) \,\dd \mu(y)$, and so
$$
    \Vert \mu*\rho_a-\rho_a \Vert \leq \int_{\RR} \Vert \delta_y * \rho_a - \rho_a \Vert\,\dd |\mu |(y) \leq \int_{\RR} \min\{2, |y|/a\} \,\dd|\mu|(y),
$$
which tends to $0$ as $a$ tends to infinity.
  }
  so we can choose $a$ large enough so that $c < 1$, and in fact $c$ as small as we like.
  
  We have shown that $\mu * \nu$ is a sum of positive measures, hence positive, and hence a probability measure. It remains to be shown that $\nu$ has finite $n$th moment, and hence is in $\P_n$. To this end, note first that
  \[
    \pi = \frac{\sqrt 2}{c}|\rho_a * (\mu - \delta)| \leq \frac{\sqrt 2}{c}(\rho_a * (|\mu| +\delta)) \in \M_n,
  \]
  and so $\pi \in \P_n$.  Since the $n$th moment of $\pi^{(k)}$ is at most $k^n$ times the $n$th moment of $\pi$, it follows that $\tau \in \P_n$, and so $\nu = \rho_a^{(2)} * \tau \in \P_n$.
\end{proof}
It is important that the proof of this lemma is constructive; indeed, we get that 
$$
    \nu =  (1-c) \rho_a^{(2)} * \big( \delta + c \pi + c^2 \pi^{(2)} + \cdots\big),
$$
for $c$ that can be arbitrarily small, and $a$ that may need to be correspondingly large. Since $\rho_a$ is the distribution of a Gaussian with zero mean and standard deviation $a$,  $\nu$ is the distribution of 
$$
    Z = W + (X_0+X_1+ \cdots + X_N),
$$
where $W$ is a Gaussian with mean zero and standard deviation $\sqrt{2}a$, $X_0 = 0$, $X_1, X_2, \ldots$ are i.i.d.\ random variables with distribution $\pi$ (defined in the proof), and $N$ is geometric with parameter $c$. In particular, when $c$ is small, $Z$ is close in total variation to $W$, as $\Vert \nu - \rho_a^{(2)} \Vert \leq c$. This shows Remark~\ref{thm:gaussian}, taking $c = \varepsilon$.

\bigskip

We now proceed with the proof of Theorem \ref{thm1}. Let $X$ and $Y$ be two random variables with $k = \EE[X] - \EE[Y] > 0$.  Define the signed measure $\sigma$ as
\[
    \sigma(A) = \frac{1}{k}\int_A F_Y(t) - F_X(t)\,\dd t
\]
for every $A \subseteq \RR$. By Tonelli's Theorem
$$\EE[X] =  \EE[X^+]-\EE[X^-] = \int_0^{\infty} 1 - F_X(t) \dd t - \int_{-\infty}^0 F_X(t) \dd t$$ 
so
\[
    \EE[X] - \EE[Y] = - \int_{\infty}^0 (F_X(t) - F_Y(t)) \dd t + \int_0^{\infty} (F_Y(t) - F_X(t)) \dd t = \int_{\RR} F_Y(t) - F_X(t)\,\dd t.
\]
Hence $\sigma(\RR) = 1$. Furthermore, $\sigma$ is a bounded measure, since
\begin{eqnarray*}
  k|\sigma|(\RR) &=& \int_{-\infty}^\infty |F_Y(t) - F_X(t)|\,\dd t\\
  &\leq& \int_{-\infty}^0 \big[F_Y(t) + F_X(t)\big]\,\dd t + \int_{0}^\infty \big[(1-F_Y(t)) + (1-F_X(t))\big]\,\dd t\\
  &=& \EE[|X|] + \EE[|Y|].
\end{eqnarray*}
Also, if $X, Y \in \P_n$ then $\sigma \in \M_{n-1}$, since, using integration by parts (i.e., Tonelli's Theorem), $$k \int_{-\infty}^\infty n|x|^{n-1}\,\dd\sigma(x) = -\int_{-\infty}^\infty |x|^n\,\dd (F_Y-F_X)(x) = \EE[|X|^n]-\EE[|Y|^n] < \infty.$$

Hence Lemma~\ref{l.mattner} implies that there exists a probability measure $\eta \in \P_{n-1}$ such that $\sigma * \eta \in \P_{n-1}$. Let $Z$ be a random variable independent from $X$ and $Y$ with distribution $\eta$. The measure $\sigma$ is by definition absolutely continuous with density $\frac{1}{k}(F_Y - F_X)$. Therefore, $\sigma * \eta$ is absolutely continuous as well, and its density $s$ satisfies, almost everywhere,\footnote{See  \citet[257xe]{fremlin}.}
\begin{eqnarray*}
  ks(x) 
  &=& \int_{\RR} (F_Y(x-t) - F_X(x-t))\,\dd \eta(t) \\
  &=& \PP[Y\leq x-Z] - d\PP[X \leq x-Z]\\
  &=& F_{Y+Z}(x) - F_{X+Z}(x).
\end{eqnarray*}
Because $\sigma * \eta$ is a probability measure and $d > 0$, then $F_{Y+Z}(x) \geq F_{X+Z}(x)$ for almost every $x$. Since the cdfs are right-continuous, this implies $F_{Y+Z} \geq F_{X+Z}$. Furthermore, this inequality is strict somewhere, since the integral of $k(F_{Y+Z} - F_{X+Z})$ is equal to one. Therefore, $X + Z >_\fosd Y + Z$. This concludes the proof of Theorem~\ref{thm1}; we have furthermore demonstrated that when $X, Y \in \P_n$ then $Z$ can be taken to be in $\P_{n-1}$, for any $n \in \{1,2,\ldots,\infty\}$.

We end this section by showing that the converse of Theorem~\ref{thm1} is also true: if $\EE[X] \leq \EE[Y]$ then there does not exist a random variable $Z$ such that $X+Z >_{\fosd} Y+Z$. This is immediate if $Z$ has finite expectation, since, as is well known, $X+Z >_1 Y+Z$ would imply that $\EE[X+Z] > \EE[Y+Z]$. For general $Z$, note that the measure 
\[
    \sigma'(A) = \int_A F_Y(t) - F_X(t)\,\dd t
\]
satisfies $\sigma'(\RR) \leq 0$, and so for any probability measure $\eta$ it holds that $[\sigma' * \eta](\RR) \leq 0$. It thus follows from the calculation above that $F_{Y+Z}(x) - F_{X+Z}(x) < 0$ for some $x$ (except in the trivial case in which $X$ and $Y$ have the same distribution). Thus it is impossible that $X+Z >_{\fosd} Y+Z$.

%
%




\section{Proof of Theorem~\ref{thm.ssod}}
\label{sec:proof-2}
Let $X$ and $Y$ be random variables in $\P_n$ with $\EE[X] = \EE[Y]$ and $k= \frac{1}{2}(\Var[Y] - \Var[X])  > 0$. Let $F_X$ and $F_Y$ be the cumulative distribution functions of $X$ and $Y$. Define the signed measure $\sigma$ as
\[
    \sigma(A) = \frac{1}{k}\int_A \int_{-\infty}^t F_Y(u) - F_X(u)\,\dd u\,\dd t.
\]
By using the assumptions that $\EE[X] = \EE[Y]$, $\EE[X^2],\EE[Y^2] < \infty$ and applying integration by parts, it follows that $\int_\RR |\int_{-\infty}^t F_X(u) - F_Y(u)\,\dd u\,| \dd t < \infty$. See, for instance, \citep[Lemma 15.2.1]{rachev2013} or \citep[p.160]{rachev2008advanced}. Hence $\sigma$ is bounded.
We show that a calculation similar to the one used in Theorem~\ref{thm1} shows that $\sigma(\RR)=1$ and that $\sigma \in \M_{n-2}$. More generally, we claim that for every $k$,
\[
    \int_{-\infty}^\infty t^k \dd \sigma(t) = \frac{1}{(k+1)(k+2)}\int_{-\infty}^\infty u^{k+2} \, \dd (F_Y - F_X)(u).
\]
As is well known, integration by parts implies
\[
    \int_{-\infty}^t F_Y (u)\, \dd u = \EE[(t-Y)^+] \text{~~and~~} \int_{-\infty}^t F_X (u)\, \dd u = \EE[(t-X)^+].
\]
Hence, for every $n$ and $k$, 
\begin{eqnarray*}
    \int_{-n}^n t^k \int_{-\infty}^t F_Y(u) - F_X(u)\,\dd u\,\dd t &=& \int_{-n}^n t^k (\EE[(t-Y)^+] - \EE[(t-X)^+]) \,\dd t. \\
    &=& \int_{-n}^n t^k \int_{-\infty}^n (t-u)^+ \, \dd (F_Y - F_X)(u) \,\dd t. \\
    &=& \int_{-\infty}^n \int_{-n}^n t^k (t-u)^+ \, \dd t \, \dd (F_Y - F_X)(u) \\
    &=& \int_{-\infty}^n \int_u^n t^k (t-u) \, \dd t \, \dd (F_Y - F_X)(u) \\
    &=& \int_{-\infty}^n \left(\frac{t^{k+2}}{k+2} -u\frac{t^{k+1}}{k+1}\right) \Bigg| ^n_u \, \dd (F_Y - F_X)(u) \\
    &=& \int_{-\infty}^n \frac{n^{k+2}}{k+2} -u\frac{n^{k+1}}{k+1} + \frac{u^{k+2}}{(k+1)(k+2)}  \, \dd (F_Y - F_X)(u)
\end{eqnarray*}
For every $n$, the last integral is well defined and finite provided $X,Y \in \P_{k+2}$. It is a standard result that every $W \in \P_{k+2}$ satisfies $n^{k+2}[1 - F_W(n) + F_W(-n)] \to 0$ as $n \to \infty$.%
\footnote{This follows from
\[
    n^{k+2}(\PP[|W|^{k+2} \geq n^{k+2}]) \leq \EE[\ind{|W|^{k+2} \geq n^{k+2}}|W|^{k+2}]
\]
and the observation that since $W^{k+2}$ is integrable, the right hand side must converge to $0$ as $n \to \infty$.
}
Hence $n^{k+2}(F_Y - F_X)(n) \to 0$ as $n \to \infty$. To see that $n^{k+1} \int_{-\infty}^n u\,\dd(F_Y-F_X)(u)$ converges to $0$, notice that
\[
    \int_{-\infty}^n n^{k+1} u \, \dd (F_Y - F_X)(u) \leq \int_{-n}^n n^{k+2} \, \dd (F_Y + F_X)(u) + \int_{-\infty}^{-n} |u|^{k+2} \, \dd (F_Y + F_X)(u).
\]
The first term on the right hand side converges to $0$ by the same argument as above, and the second term converges to $0$ from the assumption that $X,Y \in \P_{n+2}$. This concludes the proof of the claim.

As in the proof of Theorem~\ref{thm1}, 
we can invoke Lemma \ref{l.mattner} to prove the existence of a probability measure $\eta \in \P_{n-1}$ such that $\sigma * \eta \in \P_{n-1}$. Let $Z$ be a random variable independent from $X$ and $Y$ with distribution $\eta$. Then $s(x)$, the probability density function of $\sigma * \eta$, is
\begin{eqnarray*}
  ks(x) 
  &=&\int_{\RR} \int_{-\infty}^{x-t} F_Y(u) - F_X(u)\,\dd u\,\dd \eta(t) \\
  &=&\int_{\RR} \int_{-\infty}^{x} F_Y(u-t) - F_X(u-t)\,\dd u\,\dd \eta(t) \\
  &=&\int_{-\infty}^{x} \int_{\RR}  F_Y(u-t) - F_X(u-t)\,\dd \eta(t)\,\dd u \\
  &=&\int_{-\infty}^{x} F_{Y+Z}(u) - F_{X+Z}(u)\,\dd u.
\end{eqnarray*}

Since $\sigma * \eta$  is a probability measure then $s(x)$ is non-negative for almost every $x \in \RR$. Since $F_{Y+Z}$ and $F_{X+Z}$ are right-continuous, this implies $s \geq 0$. Furthermore, this inequality is strict somewhere, since the integral of $s$ is equal to one. Therefore, $X + Z >_\sst Y + Z$, as $\int_{-\infty}^{x} F_{Y+Z}(u) - F_{X+Z}(u)\,\dd u \geq 0$ for all $x$ is a well known condition for second-order stochastic dominance \citep[see Theorem 4.A.2 in][]{shaked2007stochastic}.

\section{Uniformity}\label{s.uniformity}
\label{sec:uniform}

In this section we address the following two questions. First, is the noise term obtained in Theorem~\ref{thm1} robust to changes in the distribution of $X$ and $Y$? Moreover, can it be given a closed-form description? By restricting the attention to random variables with bounded support, we provide positive answers to both questions.

We consider pairs of random variables $X$ and $Y$ such that:
\begin{enumerate}[label=(\alph*)]
    \item Their support is included in a bounded interval $[-M,M]$; and
    \item Their difference in mean is bounded below by $\EE[X] - \EE[Y] \geq \varepsilon M$, where $\varepsilon > 0$.
\end{enumerate}

Given $M$ and $\varepsilon$, we construct a variable $Z$ that satisfies $X + Z >_\fosd Y + Z$ for \textit{any} pair $X$ and $Y$ for which (a) and (b) hold. In addition, we show that $Z$ can be taken to be a combination of uniformly distributed random variables.

The random variable $Z$ is defined by three parameters: $M$ and $\varepsilon$, as described above, as well as a parameter $a > 0$, which for the next result we are going to take to be sufficiently large. Let $U_1,U_2,\ldots$ be i.i.d.\ random variables that are uniformly distributed on the union of intervals $[-a-M,-a+M] \cup [a-M,a+M]$. Let $N$ be an independent geometric random variable with parameter $1/2$, so that $\PP[N=n]=2^{-1-n}$ for $n \in \{0,1,2,\ldots\}$. Finally, let $R_1$ and $R_2$ be variables independent from $N$ and $U_1,U_2,\ldots$ and uniformly distributed on $[-a,a]$. We define
\begin{equation}\label{eq.uniformity}
    Z = R_1 + R_2 + (U_1 + U_2 + \cdots U_N).
\end{equation}
So, the random variable $Z$ is obtained as a sum of mean-zero, independent noise terms that are uniformly distributed.

\begin{theorem}
\label{thm2}
  Fix $M, \varepsilon > 0$ and $a \geq 16M\varepsilon^{-1}+8M$. The random variable $Z$, as defined in (\ref{eq.uniformity}), is such that for every $X,Y$ supported in $[-M,M]$ with $\EE[X] - \EE[Y] \geq \varepsilon M$ it holds that
  \[
  X + Z >_\fosd Y + Z.
  \]
\end{theorem}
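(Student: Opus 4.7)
The plan is to adapt the Ruzsa--Sz{\'e}kely--Mattner construction underlying Lemma~\ref{l.mattner}, with two changes: (i) the smoothing measure is the uniform $\rho_a$ (so $\rho_a^{(2)}$ is triangular) rather than Gaussian, and (ii) the auxiliary probability measure is \emph{fixed in advance} as $\pi_U$---the common law of each $U_i$---so that a single $Z$ serves every admissible pair $(X,Y)$. As in the proof of Theorem~\ref{thm1} I introduce the bounded signed measure $\sigma$ of density $k^{-1}(F_Y - F_X)$, where $k=\EE[X]-\EE[Y]\geq\varepsilon M$. Since $|F_Y-F_X|\leq 1$ on $[-M,M]$ and $0$ outside, $\sigma(\RR)=1$ and $\|\sigma\|\leq 2M/k\leq 2/\varepsilon$.

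Write $\rho_a$ for the uniform law on $[-a,a]$ and $\pi_U$ for the uniform law on $[-a-M,-a+M]\cup[a-M,a+M]$. By construction, the distribution of $Z$ is $\eta=\rho_a^{(2)}*\tau$, where $\tau=\sum_{n\geq 0}2^{-n-1}\pi_U^{(n)}$ is the law of $U_1+\cdots+U_N$. The telescoping identity $\tfrac12\pi_U*\tau=\tau-\tfrac12\delta$ then yields, exactly as in Lemma~\ref{l.mattner},
\[
\sigma*\eta \;=\; \rho_a^{(2)}*\bigl(\sigma-\delta+\tfrac12\pi_U\bigr)*\tau \;+\; \tfrac12\,\rho_a^{(2)}.
\]
The last summand is a non-negative measure and $\tau$ is a probability measure, so $\sigma*\eta\geq 0$ will follow once I show $\rho_a^{(2)}*(\sigma-\delta+\tfrac12\pi_U)\geq 0$. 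A quick density check gives $\rho_a^{(2)}\geq \tfrac12\rho_a$ everywhere, since $(2a-|x|)/(4a^2)\geq 1/(4a)$ on $|x|\leq a$ while $\rho_a$ vanishes outside $[-a,a]$. Applying this to the non-negative factor multiplying $\pi_U$, it further suffices to check that the density
\[
g(y)\;:=\;\bigl[\rho_a*(\sigma-\delta)\bigr](y)+\tfrac14\pi_U(y)
\]
is non-negative; the outer convolution by $\rho_a$ preserves non-negativity.

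The only quantitative step---and the only place the hypothesis on $a$ is used---is verifying $g\geq 0$. I split into cases in $y$. For $|y|\leq a-M$, the inequality $|y-x|\leq a$ for all $x\in[-M,M]$ combined with $\sigma(\RR)=1$ gives $[\rho_a*\sigma](y)=1/(2a)=\rho_a(y)$, so the first summand of $g$ vanishes; and $\pi_U(y)=0$ as well. For $|y|>a+M$, both summands of $g$ are identically zero by support considerations. In the nontrivial region $|y|\in[a-M,a+M]$, where $\pi_U(y)=1/(4M)$, I apply the crude pointwise bound
\[
\bigl|[\rho_a*(\sigma-\delta)](y)\bigr| \;\leq\; \|\rho_a\|_\infty\cdot\|\sigma-\delta\| \;\leq\; \frac{1}{2a}\Bigl(\tfrac{2}{\varepsilon}+1\Bigr),
\]
which yields $g(y)\geq \tfrac{1}{16M}-\tfrac{1}{2a}\bigl(\tfrac{2}{\varepsilon}+1\bigr)$; this is non-negative precisely when $a\geq 8M(2/\varepsilon+1)=16M/\varepsilon+8M$, which is the hypothesis of the theorem.

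Having established $\sigma*\eta\geq 0$, the identity $[\sigma*\eta](y)=k^{-1}(F_{Y+Z}(y)-F_{X+Z}(y))$ (valid a.e.\ since $\sigma*\eta$ is absolutely continuous, as computed in the proof of Theorem~\ref{thm1}) gives $F_{Y+Z}\geq F_{X+Z}$ pointwise by right-continuity of CDFs. The inequality is strict somewhere because $\sigma*\eta$ has total mass $1$ and hence is not identically zero; consequently $X+Z>_{\fosd} Y+Z$, as claimed. The main obstacle, which dictates the quantitative threshold on $a$, is the pointwise sign control of $g$ in the transition regions $|y|\in[a-M,a+M]$, where the cancellation between $\rho_a*\sigma$ and $\rho_a$ is delicate and must be dominated by the carefully placed mass of $\pi_U$ at distance $a$ from the origin.
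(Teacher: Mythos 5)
Your proof is correct and follows essentially the same route as the paper's: it is the Ruzsa--Sz\'ekely--Mattner construction with uniform smoothing, the bound $\rho_a^{(2)}\geq\tfrac12\rho_a$, the auxiliary measure taken uniform on $[-a-M,-a+M]\cup[a-M,a+M]$, geometric weight $c=\tfrac12$, and the same three-region density check (your estimate $\tfrac{1}{2a}(\|\sigma\|+1)$ with $\|\sigma\|\leq 2/\varepsilon$ coincides with the paper's Lemma on $m_a$ with $L=1/(\varepsilon M)$), yielding the identical threshold $a\geq 16M\varepsilon^{-1}+8M$. The only cosmetic difference is that the paper packages the uniformity as a separate proposition over the class $\M_M^L$, while you verify it directly for $\sigma$.
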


Notice that for smaller $\varepsilon$, as the difference in expectation between $X$ and $Y$ becomes negligible, the support of each term $U_i$ becomes increasingly large.
The random variable $Z$ is reasonably ``well-behaved'': for example, it has all moments, and exponentially vanishing tails. Using Wald's Lemma, its variance can be shown to be
\[
  \Var[Z] = \frac{2}{3} (M/\varepsilon)^2 \left(1024 +1024  \varepsilon+257 \varepsilon^{2}\right),
\]
so that its standard deviation is of order $M\varepsilon^{-1}$, and never more than $30 M\varepsilon^{-1}$.

To put this into perspective, consider an agent who must make a choice between two lotteries that pay between -\$10 and \$10, and whose expected value differs by at least \$1. Theorem~\ref{thm2} implies that there exists a zero mean independent background noise which---for {\em any} utility function---makes the lottery with the higher expectation preferable. Moreover, this noise need not be incredibly large: a standard deviation of \$3000 suffices.

\subsection{Proof of Theorem~\ref{thm2}}

To prove Theorem~\ref{thm2} we first prove a version of Lemma~\ref{l.mattner} that gives a stronger results for a smaller class of measures. Given $M,L > 0$, denote by $\M_M^L$ the set of bounded signed measures $\mu$ that are supported in $[-M,M]$, and for which $|\mu|([a,b]) \leq L(a-b)$ for all $a>b$ in $[-M,M]$.
\begin{proposition}\label{l.mattner1} 
  For every $M,L > 0$ there is a measure $\nu \in \P_\infty$ so that $\mu * \nu \in \P_\infty$ for every $\mu \in \M_M^L$ with $\mu(\RR)=1$.
\end{proposition}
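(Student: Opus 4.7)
The plan is to adapt the Ruzsa--Székely construction from Lemma~\ref{l.mattner} so that the auxiliary probability measure $\pi$ and the geometric parameter $c$ used there can be chosen \emph{uniformly} in $\mu$. What makes this possible is that $\M_M^L$ provides more than a total-variation bound on $\mu$: the defining Lipschitz inequality says that each $|\mu|$ has Lebesgue density bounded by $L$ on $[-M,M]$, and in particular $\|\mu\|\leq 2ML$ for every $\mu$ in the class.

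Let $\rho_a$ be the uniform probability measure on $[-a,a]$. A direct computation (using that the density of $\rho_a * \mu$ at $x$ equals $\mu([x-a,x+a])/(2a)$) shows that for every $\mu \in \M_M^L$ with $\mu(\RR)=1$, the signed measure $\rho_a*(\mu-\delta)$ is absolutely continuous, supported in $[-a-M,-a+M]\cup[a-M,a+M]$ (on the middle interval $[-a+M,a-M]$ both $\rho_a*\mu$ and $\rho_a$ have density $1/(2a)$, and both vanish outside $[-a-M,a+M]$), and its density has sup-norm at most $(\|\mu\|+1)/(2a)\leq(2ML+1)/(2a)$. This is the genuinely new, \emph{uniform}, ingredient relative to Lemma~\ref{l.mattner}, where the analogous bound was only required to hold for a single $\mu$; obtaining it for the entire class is the main obstacle and essentially the only place where membership in $\M_M^L$ (rather than just $\M_\infty$) is used.

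Given this bound, I would take $\pi$ to be the uniform probability measure on $[-a-M,-a+M]\cup[a-M,a+M]$ (density $1/(4M)$) and pick $a$ large enough that $c:=4M(2ML+1)/a$ lies strictly in $(0,1)$. Then for every admissible $\mu$ the density of $(c/2)\pi$ dominates $|\rho_a*(\mu-\delta)|$ pointwise, so $(2/c)\rho_a*(\mu-\delta)+\pi\geq 0$. Setting $\tau=(1-c)\sum_{k\geq 0}c^k\pi^{(k)}$ and $\nu=\rho_a^{(2)}*\tau$, the identity
\[
    \mu*\nu=\rho_a^{(2)}*(\mu-\delta+c\pi)*\tau+(1-c)\rho_a^{(2)},
\]
proved exactly as in Lemma~\ref{l.mattner} via $(\delta-c\pi)*\tau=(1-c)\delta$, combined with the pointwise bound $\rho_a^{(2)}\geq\tfrac{1}{2}\rho_a$ (immediate from the triangular density of $\rho_a^{(2)}$), turns the right-hand side into a sum of positive measures. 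Thus $\mu*\nu\geq 0$ and $(\mu*\nu)(\RR)=1$, so $\mu*\nu$ is a probability measure. Finally $\nu\in\P_\infty$ because $\rho_a^{(2)}$ and $\pi$ have compact support while $\sum_k c^k k^n<\infty$ for every $n$; since $\mu$ also has bounded support, $\mu*\nu\in\P_\infty$. Everything past the uniform density bound is a bookkeeping exercise verifying that the constants $(a,c,\pi,\tau,\nu)$ do not depend on $\mu$.
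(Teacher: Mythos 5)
Your proof is correct and follows essentially the same route as the paper's: convolve with the uniform distribution $\rho_a$, observe that $(\mu-\delta)*\rho_a$ is supported on $[-a-M,-a+M]\cup[a-M,a+M]$ with density uniformly bounded by $(2ML+1)/(2a)$ for all $\mu\in\M_M^L$, take $\pi$ uniform on that union of intervals, and run the Ruzsa--Sz\'ekely identity with $\rho_a^{(2)}\geq\tfrac12\rho_a$; the paper's only difference is the bookkeeping choice $c=1/2$, $a\geq 16M^2L+8M$ instead of your $c=4M(2ML+1)/a$, which amounts to the same constraint $ac\geq 8M^2L+4M$.
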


Given $\mu \in \M_\infty$ and $a>0$, define 
\[
  \mu_a = (\mu-\delta) * \rho_a,
\]
where, as in the proof of Lemma~\ref{l.mattner}, $\delta$ is the point mass at $0$, and $\rho_a$ is the uniform distribution on $[-a,a]$. Let $r_a = \frac{1}{2a}\ind{[-a,a]}$ be the density of $\rho_a$. It follows that $\mu_a$ has a density $m_a(x) = \int_\RR r_a(x-t) \dd (\mu - \delta) (t)$ given by
\begin{equation}
\label{e.m_a}
  m_a(x) = \frac{1}{2a}\mu([x-a,x+a]) - r_a(x)
\end{equation}
and which satisfies the following properties:

\begin{lemma}
\label{c.m_a}
  For any $\mu \in \M_M^L$ it holds that
  \begin{enumerate}
      \item $|m_a(x)|$ is at most $M L/a+1/(2a)$.
      \item $m_a(x)$ vanishes outside of the union of the intervals $[-a-M,-a+M]$ and $[a-M,a+M]$.
  \end{enumerate}
\end{lemma}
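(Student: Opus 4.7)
\textbf{Proof plan for Lemma~\ref{c.m_a}.} Both claims follow directly from the explicit density formula $m_a(x) = \tfrac{1}{2a}\mu([x-a,x+a]) - r_a(x)$ in~\eqref{e.m_a}, via elementary estimates and a case analysis on $x$. Beyond the formula and the definition of $\M_M^L$, the only structural input is that $\mu$ is supported in $[-M,M]$ and (for part~(2)) normalized so that $\mu(\RR)=1$, which is the setting in which the lemma is applied in Proposition~\ref{l.mattner1}.

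For part~(1), I would bound the two terms in~\eqref{e.m_a} separately. Because $\mu$ is supported in $[-M,M]$, we have $\mu([x-a,x+a]) = \mu\bigl([x-a,x+a]\cap[-M,M]\bigr)$, and the intersection is an interval of length at most $2M$. The Lipschitz-type bound built into the definition of $\M_M^L$ then gives $|\mu([x-a,x+a])|\le 2LM$, so the first term in~\eqref{e.m_a} has absolute value at most $LM/a$. Since also $0\le r_a(x)\le 1/(2a)$ pointwise, the triangle inequality yields $|m_a(x)|\le LM/a + 1/(2a)$.

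For part~(2), I would partition the complement of $[-a-M,-a+M]\cup[a-M,a+M]$ into its three connected pieces. If $|x|>a+M$ then $[x-a,x+a]\cap[-M,M]=\emptyset$, so $\mu([x-a,x+a])=0$, while $|x|>a$ gives $r_a(x)=0$; hence $m_a(x)=0$. If $x\in(-a+M,\,a-M)$ then $x-a<-M$ and $x+a>M$, so $[-M,M]\subseteq[x-a,x+a]$; therefore $\mu([x-a,x+a])=\mu(\RR)=1$ and $r_a(x)=1/(2a)$, and the two terms in~\eqref{e.m_a} cancel exactly.

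The only non-routine observation is that the middle-plateau cancellation in part~(2) genuinely requires $\mu(\RR)=1$: without that normalization one would be left with the residual constant $(\mu(\RR)-1)/(2a)$ on $(-a+M,a-M)$, and the vanishing claim would fail. The Lipschitz bound in $\M_M^L$ plays no role here; it enters only in part~(1). Since Proposition~\ref{l.mattner1} is only invoked with $\mu(\RR)=1$, this causes no issue.
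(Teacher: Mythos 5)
Your proof is correct and follows essentially the same route as the paper's: part~(1) bounds the two terms of \eqref{e.m_a} via the Lipschitz bound on $|\mu|$ over the support $[-M,M]$ and $0\le r_a\le 1/(2a)$, and part~(2) is the same three-piece case analysis (outer tails where both terms vanish, middle plateau where they cancel). Your observation that the plateau cancellation needs $\mu(\RR)=1$ is accurate---the paper's proof uses this implicitly when it writes $\mu([x-a,x+a])=1$, and, as you note, the lemma is only invoked in Proposition~\ref{l.mattner1} under that normalization.
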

\begin{proof}
\begin{enumerate}
  
  \item Since $\mu \in \M_M^L$, its norm $||\mu||$ is at most $2 M L$, and so $|\mu([x-a,x+a])| \leq 2 M L$. And since $|r_a| \leq 1/(2a)$, it follows that $|m_a| \leq M L/a + 1/(2a)$. 
  \item For $x$ such that $-a+M < x < a-M$ we have that $\mu([x-a,x+a])=1$, since $[x-a,x+a]$ includes $[-M,M]$ and thus all of the support of $\mu$. Since $r_a(x)=1/(2a)$ in this range it follows that $m_a(x)=0$. For $x<-a-M$ or $x>a+M$ we have that $r_a(x)=0$, and that likewise $\mu([x-a,x+a])=0$, since now $[x+a,x-a]$ does not intersect $[-M,M]$. Hence also in this range $m_a(x)=0$.
  
\end{enumerate}
\end{proof}

\begin{proof}[Proof of Proposition~\ref{l.mattner1}]
  The proof will follow the proof of Lemma~\ref{l.mattner} and will refer to some of the arguments given there. As in that proof, let $\rho_a$ be the uniform distribution on $[-a,a]$. Fix some $0 < c < 1$ and $\pi \in \P_\infty$, and let
  \[
    \tau = (1-c)\sum_{k=0}^\infty c^k \pi^{(k)}
  \]
  and
  \[
    \nu = \rho_a^{(2)} * \tau.
  \]
  Choose any $\mu \in \M_M^L$. Then as in the proof of Lemma~\ref{l.mattner} we have that
  \[
    \mu * \nu = \rho_a^{(2)} * (\mu -\delta + c \pi)*\tau + (1-c)\rho_a^{(2)}
  \]
  and that
  \begin{eqnarray*}
    \rho_a^{(2)} * (\mu -\delta + c \pi) 
    &\geq&  \frac{c}{2}  \rho_a * \left[\frac{2}{c} \rho_a * (\mu -\delta) +  \pi\right]\\
    &=& \frac{c}{2}  \rho_a * \left[\frac{2}{c}\mu_a +  \pi\right]. 
  \end{eqnarray*}
  We now show how to choose $a, c$ and $\pi$ so that $\frac{2}{c} \mu_a +  \pi$ is a positive measure, independent of our choice of $\mu \in \M_M^L$. By part 1 of Lemma~\ref{c.m_a} we know that the density of $\frac{2}{c} |\mu_a|$ is bounded from above by $2M L/( a c)+1/( a c)$, and by part 2 of the same lemma we know that it vanishes outside $[-a-M,-a+M] \cup [a-M,a+M]$. Therefore, if we choose $\pi$ to be the uniform distribution on this union of intervals then  $\frac{2}{c} \mu_a +  \pi$ will be positive if the density of $\pi$ on its support---which equals $1/(4M)$---is larger than our bound on the density of $\frac{2}{c}|\mu_a|$. That is, we would like $a$ and $c$ to satisfy
  \[
    \frac{1}{4M} \geq \frac{2 M L}{ a c} + \frac{1} { a c},
  \]    
  while keeping $c < 1$. Rearranging yields
  \[
    a c \geq 8 M^2 L + 4M,
  \]    
  which is satisfied if we take $c = 1/2$ and any $a \geq 16M^2 L + 8M$. The proof that $\nu \in \P_\infty$ is identical to the one in the proof of Lemma~\ref{l.mattner}.
\end{proof}

Given Proposition~\ref{l.mattner1}, the proof of Theorem~\ref{thm2} follows closely the proof of Theorem~\ref{thm1}. If $X$ and $Y$ are supported on $[-M,M]$ and if $\EE[X]-\EE[Y] \geq M\varepsilon$ then the measure $\sigma$, which is given by
\[
    \sigma(A) = \frac{1}{\EE[X] - \EE[Y]}\int_A F_Y(t) - F_X(t) \,\dd t,
\]
is in $\M_M^{1/(\varepsilon M)}$. Proposition~\ref{l.mattner1} shows we can find $\nu \in \P_\infty$ such that $\sigma * \nu \in \P_\infty$. In addition, as shown in the proof of the same proposition, given $a \geq 16M\varepsilon^{-1}+8M$, $\nu$ can be taken to be the distribution of the random variable $Z$ defined as (\ref{eq.uniformity}) in the main text. Finally, the same argument used in the proof of Theorem~\ref{thm1} shows that $X + Z >_\fosd Y + Z$.

\section{Calculations For Section \ref{sec:orders}}
\label{sec:calculations}
Let $X$ be a gamble paying $g > 0$ and $-l<0$ with equal probability. Let $Y = 0$. Following the proof Theorem \ref{thm1}, we first define $\mu$ to be the measure with support $[-l,g]$  and density
\[
    \frac{1}{\EE[X]}(F_Y - F_X)(s) = \begin{cases} 
          -\frac{1}{g-l} & s \in [-l,0] \\
          \frac{1}{g-l} & s \in [0,g] 
       \end{cases}
\]
and then seek to find a probability measure $\pi$, and parameters $a$ and $c$ such that

\begin{equation}\label{eq:ineq}
    (\rho_a * \mu -\rho_a) +  \frac{c}{\sqrt 2} \pi \geq 0.
\end{equation}

Let $\phi_a$ and $\Phi_a$ be, respectively, the pdf and CDF of a Gaussian random variable with mean zero and standard deviation $a$. Then $\rho_a * \mu -\rho_a$ has density
\[
    t(s) = \frac{1}{g-l} (2\Phi_a(s) - \Phi_a(s-g) - \Phi_a(s+l)) - \phi_a(s) .
\]

We claim that $t(s) \geq 0$ for $s > g$ and $t(s) \leq 0$ for $s < -l$. By the mean-value theorem, for each $s$ there exist $s_1 \in [s-g,s]$ and $s_2 \in [s, s+l]$ such that $\Phi_a(s) - \Phi_a(s-g) = \phi_a(s_1)g$ and $\Phi_a(s) - \Phi_a(s+l) = \phi_a(s_2)(-l)$. Hence
\begin{equation}\label{eq:ineq2}
    t(s) = \frac{1}{g-l}(\phi_a(s_1)g - \phi_a(s_2)l) - \phi_a(s).
\end{equation}
Suppose $s - g > 0$. Then $\phi_a(s_1)g > \phi_a(s)g$. Hence \eqref{eq:ineq2} is greater than
\[
    \frac{1}{g-l}(\phi_a(s)g - \phi_a(s_2)l) - \phi_a(s) = \frac{l}{g-l}(\phi_a(s) - \phi_a(s_2)).
\]
Since $0 < s < s_2$ then $\phi_a(s) - \phi_a(s_2) > 0$. This shows that $t(s) \geq 0$. Now suppose $s < -l$. Then $\phi_a(s_1)g < \phi_a(s)g$. Hence \eqref{eq:ineq2} is lower than
\[
    \frac{1}{g-l}(\phi_a(s)g - \phi_a(s_2)l) - \phi_a(s) = \frac{l}{g-l}(\phi_a(s) - \phi_a(s_2)),
\]
and because $s < s_2 < 0$ then $\phi_a(s) - \phi_a(s_2) < 0$. Therefore $t(s) \leq 0$. This concludes the proof of the claim.
We take $\pi$ to be absolutely continuous with density $f$. Then \eqref{eq:ineq} is satisfied if
\[
    f(s) \geq -\frac{\sqrt{2}}{c}t(s).
\]
We take $f(s) = 0$ if $s > g$ (since then $t(s) \leq 0$) and 
\[
    f(s) = \frac{\sqrt 2}{c} \left\vert t(s) \right\vert \text{~~if~~} s \leq g
\]
and then set
\[
    c = \sqrt 2 \int_{-\infty}^g \left\vert t(s) \right\vert \dd s
\]
so that $f$ integrates to $1$.

Given the parameters $g$, $l$ and $a$, the coefficient $c$ and the pdf $f$ can be computed numerically. This allows us to describe some quantitative features of the background noise $Z$, as discussed in section \ref{sec:orders}.

Table \ref{table:computation} provides some examples. For instance, when the binary gamble $X$ pays $\$12$ and -$\$10$ dollars with probability 1/2, there exists a background risk $Z = W + U_1 + \ldots + U_N$ such that $X + Z >_1 Z$ and: the Gaussian $W$ has standard deviation 3,500, $\PP[N=n]=(0.014)^n$ for $n \in \{0,1,2,\ldots\}$, and $Z$ has standard deviation 3,525.

\begin{table}
\begin{center}
\begin{tabular}{ r|r|r|r|r } 
   \multicolumn{1}{c}{$g$} & 
   \multicolumn{1}{|c}{$l$} &
   \multicolumn{1}{|c}{$\sigma_W$} & 
   \multicolumn{1}{|c}{$c$} & 
   \multicolumn{1}{|c}{$\sigma_Z$} \\
 \hline
 12     & 10    & 3,500         & 0.014      & 3,525           \\
 12     & 10    & 4,000         & 0.011      & 4,025          \\ 
 100    & 50    &  4,500        & 0.022      & 4,551          \\
 100    & 50    & 10,000        & 0.010      & 10,050         \\
 100    & 70    & 11,000        & 0.018      & 11,102         \\
\end{tabular}
\end{center}
\caption{Properties of the noise $Z = W + U_1 + \ldots + U_N$, where $W$ is Gaussian with mean zero and standard deviation $\sigma_W$, $(U_i)$ are i.i.d.\ random variables, $N$ is geometrically distributed with parameter $c$, and $\sigma_Z$ is the standard deviation of $Z$.
}
\label{table:computation}
\end{table}

\end{document}